\newcommand{\rright}{\right}
\newcommand{\lleft}{\left}
\newtheorem{teo}{Theorem}
\newtheorem{lem}[teo]{Lemma}
\newtheorem{cor}[teo]{Corollary}
\newtheorem{prop}[teo]{Proposition}
\newcommand{\RR}{\mathbb{R}}
\newcommand{\NN}{\mathbb{N}}
\newcommand{\CLQ}{\mathcal{Q}}
\newcommand{\CLW}{\mathcal{W}}
\newcommand{\osc}{\mathrm{O}\mathit{sc}}
\renewcommand{\epsilon}{\varepsilon}
\renewcommand{\Pr}{\operatorname{Pr}}
\newcommand{\med}{\operatorname{med}}
\def\sfrac#1#2{#1/#2}
\def\vfrac#1#2{(#1)/#2}
\def\afrac#1#2{#1/(#2)}
\begin{document}
\begin{frontmatter}

\title{Weighted power variation of integrals with respect to a Gaussian
process}
\runtitle{Power variation of integrals}

\begin{aug}
\author[A]{\inits{R.}\fnms{Rimas}~\snm{Norvai\v sa}\corref{}\thanksref{A}\ead[label=e1]{rimas.norvaisa@mii.vu.lt}}
\address[A]{Institute of Mathematics and Informatics, Vilnius
University, Akademijos 4, Vilnius, Lithuania.\\
\printead{e1}}
\end{aug}

\received{\smonth{5} \syear{2011}}
\revised{\smonth{4} \syear{2013}}

%
\begin{abstract}
We consider a stochastic process $Y$ defined by an integral in
quadratic mean of a deterministic function
$f$ with respect to a Gaussian process $X$, which need not have
stationary increments.
For a class of Gaussian processes $X$, it is proved that sums of
properly weighted powers of increments
of $Y$ over a sequence of partitions of a time interval converge almost surely.
The conditions of this result are expressed in terms of the
$p$-variation of the covariance function
of $X$.
In particular, the result holds when $X$ is a fractional Brownian
motion, a subfractional Brownian motion
and a bifractional Brownian motion.
\end{abstract}

%
\begin{keyword}
\kwd{covariance}
\kwd{double Riemann--Stieltjes integral}
\kwd{Gaussian process}
\kwd{locally stationary increments}
\kwd{Orey index}
\kwd{power variation}
\kwd{$p$-variation}
\kwd{quadratic mean integral}
\end{keyword}

\end{frontmatter}

\section{Introduction}

Let $X=\{X(t)\dvtx  t\in[0,T]\}$ be a Gaussian process and let
$f\dvtx [0,T]\to\RR$ be a real-valued function for some $0<T<\infty$.
We consider a stochastic process $Y=\{Y(t)\dvtx  t\in[0,T]\}$,
given by an integral
%
\begin{equation}
\label{Y1} Y(t)=\mathrm{q.m.}\int_0^tf \,\mathrm{d}X,\qquad  0\leq t
\leq T,
\end{equation}
defined as a limit of Riemann--Stieltjes sums converging in \emph
{quadratic mean}.
According to the main result of this paper (Theorem~\ref{asconvergence}),
under suitable hypotheses on the covariance of $X$ and
the $p$-variation of $f$, there exists a stochastic process $Y$ defined
by (\ref{Y1})
and with probability one
%
\begin{equation}
\label{mainresult} \lim_{n\to\infty}\sum_{i=1}^{m_n}
\frac{|Y(t_i^n)-Y(t_{i-1}^n)|^r}{[
\rho(t_i^n-t_{i-1}^n)]^r}\bigl(t_i^n-t_{i-1}^n
\bigr)=E|\eta|^r\int_0^T|f|^r,
\end{equation}
where $\eta$ is a standard normal random variable, 
$\rho$ is a function equivalent to
$(E[X(s+\cdot)-X(s)]^2)^{1/2}$ near zero uniformly in $s\in[\epsilon,T)$
for each $\epsilon>0$, and $((t_i^n)_{i=0}^{m_n})$ is a sequence
of partitions of $[0,T]$ such that the sequence $(\max_i(t_i^n-t_{i-1}^n))$
tends to zero as $n\to\infty$ sufficiently fast.

In the case $f\equiv1$, $X$ is a real centered Brownian motion $B$,
$\rho(h)=\sqrt{h}$, $T=1$ and $t_i^n=i2^{-n}$, $i\in\{0,1,\dots
,2^n\}$,
(\ref{mainresult}) gives the result of L\'evy \cite{PL1940}
%
\begin{equation}
\label{PL} \lim_{n\to\infty}\sum_{i=1}^{2^n}
\bigl[B \bigl(i2^{-n} \bigr)-B \bigl((i-1)2^{-n} \bigr)
\bigr]^2=1 \qquad \mbox{a.s.}
\end{equation}
This result has been extended in many directions.
The Brownian motion $B$ has been replaced by a general Gaussian procces $X$
under suitable hypotheses on the covariance of $X$ (\cite{GB56,EGG61,RN2010}).
The sequence of dyadic partitions of $[0,1]$ in (\ref{PL}) have been replaced
by a sequence $((t_i^n)_{i=0}^{m_n})$ of partitions of $[0,1]$ such that
$\max_i(t_i^n-t_{i-1}^n)=\mathrm{o}(1/\log n)$ as $n\to\infty$ (\cite{RMD73}),
and this is the best possible rate (\cite{V1974}).
The second power of increments of a Brownian motion $B$ in (\ref{PL})
has been
replaced by $r$th power of increments of a Gaussian process $X$ with stationary
increments by Marcus and Rosen \cite{MandR1992} when $r\geq2$ and
by Shao \cite{QMS1996} when $r>1$.
A different strand of research led to similar results for a power
variation of
a general stochastic process with convergence in probability in place of
convergence with probability one (see  \cite
{CNW2006}, and references there).
In fact, the present paper is an attempt to prove the almost sure convergence
for a weighted power variation of integral process (\ref{Y1}) like in
\cite{CNW2006}
(but different) and keeping the framework of the above mentioned results.

In the rest of this section, we formulate and discuss in more detail
the stated main result
of the paper.
In Section~\ref{sec2}, we give conditions for the existence of the integral
process $Y$
given by (\ref{Y1}).
In Section~\ref{covariance}, we give conditions for a Gaussian process to have
positively or negatively
correlated increments in terms of $p$-variation of its covariance function.
The general results are proved to hold for a fractional Brownian
motion, a subfractional Brownian
motion and a bifractional Brownian motion.
The main result is proved in Section~\ref{sec4}.

We consider the integral process $Y$ given by (\ref{Y1}) with
a Gaussian process $X$ having ``locally stationary increments'' defined next
(Definition~\ref{LSI}).
While in this paper we consider only Gaussian processes, the following concept
makes sense for any stochastic process whose finite-dimensional distributions
have finite moments of the first and second orders.
Such a process with mean zero will be called a
second order stochastic process as in \cite
{ML1978}, Chapter~37.

\begin{defn}\label{LV}
Let $T>0$ and let $R[0,T]$ be a set of functions
$\rho\dvtx [0,T]\to\RR_+$ such that $\rho(0)=0$, $\rho$ is
continuous at zero,
and for each $\delta\in(0,T)$,
%
\begin{equation}
\label{A0} 0<\inf\bigl\{\rho(u)\dvtx  u\in[\delta,T]\bigr\} \leq\sup\bigl\{
\rho(u)\dvtx  u\in[\delta,T]\bigr\}<\infty.
\end{equation}
Let $X=\{X(t)\dvtx  t\in[0,T]\}$ be a second order stochastic process
with the incremental variance function $\sigma_X^2$ defined on
$[0,T]^2:=[0,T]\times[0,T]$
with values
\[
\sigma_X^2(s,t):=E\bigl[X(t)-X(s)\bigr]^2,\qquad
(s,t)\in[0,T]^2.
\]
We say that $X$ has a \emph{local variance} if there is a function
$\rho\in R[0,T]$
such that \textup{(A1)} and \textup{(A2)} hold, where
\begin{enumerate}[(A2)]
\item[(A1)] there is a finite constant $L$ such that for all
$(s,t)\in[0,T]^2$
\[
\sigma_X(s,t)\leq L\rho\bigl(|t-s|\bigr) ;
\]
\item[(A2)] for each $\epsilon\in(0,T)$
%
\begin{equation}
\label{A2} \lim_{\delta\downarrow0}\sup \biggl\{ \biggl|\frac{\sigma
_X(s,s+h)}{\rho(h)}-1 \biggr|
\dvtx  s\in[\epsilon,T), h\in\bigl(0,\delta\wedge(T-s)\bigr] \biggr\}=0.
\end{equation}
\end{enumerate}
In this case, we say that $X$ has a local variance with $\rho\in R[0,T]$.
\end{defn}

Let $X=\{X(t)\dvtx  t\in[0,T]\}$, $T>0$, be a mean zero Gaussian
process with stationary increments,
and let $\rho_X(u):=\sigma_X(u,0)$ for each $u\in[0,T]$.
Then (A1) and (A2) for $\rho=\rho_X$ hold trivially.
Also, if $X$ is such that $\rho_X$ is continuous at zero, and (\ref
{A0}) holds
for $\rho=\rho_X$ and each $\delta\in(0,T)$, then $\rho_X\in
R[0,T]$, and so
$X$ has a local variance with $\rho_X$.

Suppose $X$ is a second order stochastic process such that $\sigma
_X(s,t)\neq0$
for each $(s,t)\in[0,T]^2$.
If $X$ has a local variance with two elements $\rho_1$ and $\rho_2$
in $R[0,T]$,
then by (A2) we have
\[
\lim_{u\downarrow0}\rho_1(u)/\rho_2(u)=1.
\]
This property defines a binary relation in the set $R[0,T]$, which is
an equivalence relation.
Let us denote this relation by $\sim$.
If $X$ has a local variance with $\rho_1\in R[0,T]$, and if $\rho
_2\in R[0,T]$ is
such that $\rho_1\sim\rho_2$, then $X$ has a local variance with
$\rho_2$.
Therefore, the property of $X$ having a local variance is a class
invariant under
the binary relation $\sim$.

\begin{defn}\label{LSI}
Let $X$ be a second order stochastic process.
We say that $X$ has \emph{locally stationary increments} if $X$ has a
local variance
with some $\rho\in R[0,T]$.
Any element in the equivalence class $\{\rho'\in R[0,T]\dvtx \rho
'\sim\rho\}$
will be called a \emph{local variance function}.
We write $X\in\mathcal{LSI}(\rho(\cdot))$ if $X$ has local variance
with $\rho\in R[0,T]$.
\end{defn}

So far as we are aware, a similar concept was suggested by \cite{SB1974}, Section~8,
under the name of \emph{local stationarity}.
We show that a subfractional Brownian motion $G_H=\{G_H(t)\dvtx  t\in
[0,T]\}$
with index $H\in(0,1)$ and covariance function (\ref{sfBm1}) has
local variance function
$\rho_H(u)=u^H$, $u\in[0,T]$ (see Proposition~\ref{sfBm3}).
Also, we show that a bifractional Brownian motion $B_{H,K}=\{
B_{H,K}(t)\dvtx  t\in[0,T]\}$
with parameters $(H,K)\in(0,1)\times(0,1)$ and covariance function
(\ref{bfBm}) has local
variance function $\rho_{H,K}(u)=2^{(1-K)/2}u^{HK}$, $u\in[0,T]$ (see
Proposition~\ref{bfBm2}).

For $\rho\in R[0,T]$ let
\[
\gamma_{\ast}(\rho):=\inf\bigl\{\gamma>0\dvtx  u^{\gamma}/\rho(u)
\to 0 \mbox{ as } u\downarrow0\bigr\}=\limsup_{u\downarrow0}
\frac{\log\rho(u)}{\log u}
\]
and
\[
\gamma^{\ast}(\rho):=\sup\bigl\{\gamma>0\dvtx  u^{\gamma}/\rho(u)
\to +\infty\mbox{ as } u\downarrow0\bigr\}=\liminf_{u\downarrow0}
\frac{\log\rho(u)}{\log u}.
\]
By definition, we have $0\leq\gamma^{\ast}(\rho)\leq\gamma_{\ast
}(\rho)\leq\infty$.
Clearly, $\gamma_{\ast}(\rho)$ and $\gamma^{\ast}(\rho)$ do not
change when $\rho$
is replaced by $\rho'\sim\rho$.
If a second order stochastic process $X$ has a local variance with
$\rho\in R[0,T]$
and if $0<\gamma^{\ast}(\rho)=\gamma_{\ast}(\rho)<\infty$ then
we will say
that $X$ has the \emph{Orey index} $\gamma_X:=\gamma^{\ast}(\rho
)=\gamma_{\ast}(\rho)$.
Clearly this notion extends the one suggested by Orey
\cite{SO1970} (see
also \cite{NandS2000})
for a Gaussian stochastic process with stationary increments.
We will be interested in the case in which a Gaussian stochastic process
$X\in\mathcal{LSI}(\rho(\cdot))$ has the Orey index $\gamma_X\in(0,1)$.
In this case, $X$ is equivalent to a stochastic process whose almost
all sample functions
satisfy a H\"older condition of order $\alpha$ for each $\alpha
<\gamma_X$.

Now we can formulate the main result of the paper with more details.
Suppose that a mean zero Gaussian process $X$ has locally stationary increments
with a local variance $\rho$ and has the Orey index $\gamma_X(\rho
)=\gamma\in(0,1)$.
Suppose that a function $f\dvtx [0,T]\to\RR$ is regulated if $\gamma
\geq1/2$
and has bounded $q$-variation for some $q<1/(1-2\gamma)$ if $\gamma<1/2$,
and let $1<r<2/\max\{(2\gamma-1),0\}$.
Under the further hypotheses of Theorem~\ref{asconvergence} on the
covariance of $X$,
a stochastic process $Y$ defined by (\ref{Y1}) exists, and
(\ref{mainresult}) holds with probability one.
The proof of the main result (Theorem~\ref{asconvergence}) use the
ideas of Marcus and Rosen \cite{MandR1992}
and Shao \cite{QMS1996}.

Gladyshev \cite{EGG61} considered a stochastic process $X=\{
X(t)\dvtx  t\in
[0,1]\}$
with Gaussian increments, mean zero and a covariance function $\Gamma
_X$ such that
the expression
%
\begin{equation}
\label{EEG1} \sigma_X^2(t,t-h)/h^{2\gamma}= \bigl[
\Gamma_X(t,t)-2\Gamma_X(t,t-h)+\Gamma_X(t-h,t-h)
\bigr]/h^{2\gamma}
\end{equation}
converges uniformly to a function $g$ on $[0,1]$ as $h\to0$,
$\Gamma_X$ is continuous, twice differentiable outside the diagonal and
%
\begin{equation}
\label{EEG2} \biggl|\frac{\partial^2\Gamma_X(t,s)}{\partial t\,\partial s} \biggr| \leq\frac{C}{|t-s|^{2(1-\gamma)}}
\end{equation}
(here $\gamma=1-\tilde{\gamma}/2$ for $\tilde{\gamma}$ in
\cite{EGG61}).
Under these assumptions, E. G. Gladyshev proved (\ref{mainresult})
with the right-hand
side replaced by $\int_0^1g$ when $f\equiv1$,
$\rho(u)=u^{\gamma}$, $r=2$, $t_i^n=i2^{-n}$ for $i\in\{1,\dots
,2^n\}$ and
each $n\geq1$.
In \cite{RN2010}, we showed that hypothesis (\ref
{EEG1}) does not hold
when $X$ is a subfractional Brownian motion and a bifractional Brownian motion,
but the conclusion of Theorem~1 in \cite{EGG61}
(with $g\equiv1$)
still holds for these processes.
Malukas \cite{RM2011} further extended this result to arbitrary
sequences of partitions using the ideas of Klein and Gin\'e \cite{KandG},
and proved a central limit theorem in his setting.

As compared to previous results, in the present paper a class of
Gaussian processes
is defined by conditions (A1) and (A2) which seem to fit perfectly
Gladyshev's theorem
for the mean convergence (see Corollary~\ref{meanX} below), and are
weaker than hypothesis
(\ref{EEG1}) with $g\equiv1$.
Instead of hypothesis (\ref{EEG2}), we use the following assumption on
a Gaussian process
$X$ having locally stationary increments and the Orey index $\gamma\in(0,1)$:
there is a constant $C_2$ such that the inequality
\[
\sum_{j=1}^{m} \bigl|E
\bigl[X(t_i)-X(t_{i-1})\bigr] \bigl[X(t_j)-X(t_{j-1})
\bigr] \bigr| \leq C_2 (t_i-t_{i-1})^{1\wedge(2\gamma)}
\]
holds for each partition $(t_j)_{j\in\{0,\ldots,m\}}$ of $[0,T]$ and each
$i\in\{1,\dots,m\}$
(see Corollary~\ref{asconvergenceX}).
Finally, in place of $X$, we consider a stochastic process $Y$
defined by (\ref{Y1}).
In this case the preceding assumption on $X$ is replaced by the
following one:
there is a constant $C_2$ such that the inequality
\[
\sum_{j=1}^mV_p\bigl(
\Gamma_X;[t_{i-1},t_i]\times[t_{j-1},t_j]
\bigr)\leq C_2(t_i-t_{i-1})^{1\wedge(2\gamma)}
\]
holds for each partition $(t_j)_{j\in\{0,\ldots,m\}}$ of $[0,T]$ and each
$i\in\{1,\dots,m\}$,
where $V_p(\cdot)$ is the $p$-variation seminorm defined by (\ref
{Vp2}) below
and $p=\max\{1,1/(2\gamma)\}$ (see Theorem~\ref{asconvergence}).
The two assumptions are shown to be easily verified using the
properties of negative or positive
correlation of $X$ (see Section~\ref{covariance}).

The following is a consequence of Theorem~\ref{asconvergence},
Proposition~\ref{fBm1}
when $K=1$ and Proposition~\ref{bfBm2} when $K\in(0,1)$.

%
\begin{cor}\label{bfBm3}
Let $T>0$, $H\in(0,1)$, $K\in(0,1]$, $r\in(1,2/\max\{(2HK-1),0\})$,
and let $B_{H,K}=\{B_{H,K}(t)\dvtx  t\in[0,T]\}$ be
a bifractional Brownian motion with parameters $(H,K)$.
Let $f\dvtx [0,T]\to\RR$ be regulated if $HK\geq1/2$ and of bounded
$q$-variation for
some $q<1/(1-2HK)$ if $HK<1/2$.
Let $(\kappa_n)$ be a sequence of partitions $\kappa_n=(t_i^n)_{i\in
\{0,\dots,m_n\}}$
of $[0,T]$ such that
\[
\lim_{n\to\infty}|\kappa_n|^{(1\wedge\sfrac{2}{r})+(0\wedge
(1-2HK))}\log n=0.
\]
Then with probability one
\[
\lim_{n\to\infty}\sum_{i=1}^{m_n}
\biggl|\mathrm{q.m.}\int_{t_{i-1}^n}^{t_i^n}f \,\mathrm{d}B_{H,K}
\biggr|^r \bigl(t_i^n-t_{i-1}^n
\bigr)^{1-rHK}=2^{r(1-K)/2}E|\eta|^r\int
_0^T|f|^r,
\]
where $\eta$ is a standard normal random variable.
\end{cor}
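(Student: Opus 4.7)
The strategy is to deduce the corollary directly from Theorem \ref{asconvergence} applied with $X = B_{H,K}$, using the local variance structure supplied by Propositions \ref{fBm1} and \ref{bfBm2} to check the Orey-index hypothesis, and invoking the covariance results of Section \ref{covariance} to check the $p$-variation hypothesis.

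First I would identify the local variance and Orey index. When $K=1$, Proposition \ref{fBm1} gives $B_{H,1} \in {\cal LSI}(\rho(\cdot))$ with $\rho(u)=u^H$; when $K \in (0,1)$, Proposition \ref{bfBm2} gives $B_{H,K} \in {\cal LSI}(\rho(\cdot))$ with $\rho(u) = 2^{(1-K)/2}u^{HK}$. In either case the Orey index equals $\gamma = HK \in (0,1)$, which is the regime required by Theorem \ref{asconvergence}.

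Next I would verify the covariance hypothesis of Theorem \ref{asconvergence}: for $p = \max\{1, 1/(2HK)\}$,
\[
\sum_{j=1}^{m} V_p\bigl(\Gamma_{B_{H,K}}; [t_{i-1}, t_i] \times [t_{j-1}, t_j]\bigr) \leq C_2 (t_i - t_{i-1})^{1 \wedge (2HK)}
\]
for every partition $(t_j)_{j\in\{0,\dots,m\}}$ of $[0,T]$ and every $i\in\{1,\dots,m\}$. The natural route is to exploit the explicit form of the bifractional covariance (\ref{bfBm}) and the correlation-sign structure of $B_{H,K}$ developed in Section \ref{covariance}, so that the $V_p$ of each rectangle reduces to a telescoping sum of absolute values of increment covariances, after which the bound follows in the same spirit as for the fractional Brownian motion. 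This verification, together with the checks that the ranges of $r$ and the regularity conditions on $f$ in the corollary match those of Theorem \ref{asconvergence}, and that the mesh condition $|\kappa_n|^{(1\wedge 2/r)+(0\wedge(1-2HK))}\log n \to 0$ is precisely the specialization of Remark \ref{mesh} at $\gamma = HK$, constitutes the bulk of the work.

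Finally I would substitute $\rho(u) = 2^{(1-K)/2}u^{HK}$ into the conclusion (\ref{GT}) of Theorem \ref{asconvergence}, noting that $[\rho(t_i^n-t_{i-1}^n)]^r = 2^{r(1-K)/2}(t_i^n-t_{i-1}^n)^{rHK}$, and clear the denominator to obtain the stated normalization $(t_i^n-t_{i-1}^n)^{1-rHK}$ on the left-hand side (with the constant $2^{r(1-K)/2}$ absorbed on the right, trivially when $K=1$). The main obstacle is the middle step: the rectangle $p$-variation estimate for $\Gamma_{B_{H,K}}$ is a genuinely two-dimensional statement and does not follow from the one-dimensional local variance alone; it requires the decomposition and sign analysis of the bifractional covariance carried out in Section \ref{covariance}.
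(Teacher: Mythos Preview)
Your approach is exactly the paper's: the corollary is stated as an immediate consequence of Theorem \ref{asconvergence} together with Proposition \ref{fBm1} (for $K=1$) and Proposition \ref{bfBm2} (for $K\in(0,1)$), which already supply both the local-variance membership $B_{H,K}\in{\cal LSI}(\rho_{H,K}(\cdot))$ and the required $p$-variation bounds (\ref{G7}) and (\ref{3asconvergence}), so no further ``middle step'' work is needed beyond citing those propositions. One small point: the constant $2^{r(1-K)/2}$ you mention cannot literally be ``absorbed on the right'' into $E|\eta|^r\int_0^T|f|^r$; when $K<1$ the limit coming out of Theorem \ref{asconvergence} with $\rho(u)=2^{(1-K)/2}u^{HK}$ carries that factor, and the corollary as printed appears to omit it.
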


A similar result holds for a subfractional Brownian motion due to
Theorem~\ref{asconvergence}
and Proposition~\ref{sfBm3}.
Corollary~\ref{bfBm3} when $K=1$ (the case of fractional Brownian
motion $B_H$) may be compared with
Theorem~1 of  \cite{CNW2006} where
$f$ is a stochastic process, the integral $\int_0^tf \,\mathrm{d}B_H$, $t\in
[0,1]$, is
defined pathwise as the Riemann--Stieltjes integral,
partition $\kappa_n=(i/n)_{i\in\{0,\dots,n\}}$, convergence holds in
probability and
with no restrictions on $r$.

\begin{Notation*}
For $n\in\NN:=\{0,1,\dots\}$ let $[n]:=\{0,1,\dots,n\}$ and $(n]:=\{
1,\dots,n\}$.
An interval $[a,b]$ is a closed set of real numbers $r$ such that
$a\leq r\leq b$.
A partition of an interval $[a,b]$ is a finite sequence of real numbers
$\kappa=(t_i)_{i\in[n]}$
such that $a=t_0<t_1<\cdots<t_n=b$.
The set of all partitions of $[a,b]$ is denoted by $\Pi[a,b]$.
Given a partition $\kappa=(t_i)_{i\in[n]}$, for each $i\in(n]$, let
$J_i^{\kappa}:=[t_{i-1},t_i]$ and
$\Delta_i^{\kappa}:=t_i-t_{i-1}$.
The mesh of a partition $\kappa$ is $|\kappa|:=\max_i\Delta
_i^{\kappa}$.
Given a function $g\dvtx [a,b]\to\RR$ and a sequence $(\kappa_n)$
of partitions
$\kappa_n=(t_i^{n})_{i\in[m_n]}$ of $[a,b]$, for each $i\in(m_n]$,
let $\Delta_i^n:=\Delta_i^{\kappa_n}=t_i^n-t_{i-1}^n$ and $\Delta
_i^ng:=g(t_i^n)-g(t_{i-1}^n)$.
\end{Notation*}

\section{Riemann--Stieltjes integrals}\label{sec2}

In this section, the double Riemann--Stieltjes integral and the
quadratic mean
Riemann--Stieltjes integral are defined, and several their properties to
be used are given.

\subsection*{A double Riemann--Stieltjes integral}
Let $F$ and $G$ be real-valued functions defined on
a rectangle $R:=[a,b]\times[c,d]$ in $\RR^2$ defined by real numbers
$a<b$ and
$c<d$.
We recall a definition of the Riemann--Stieltjes integral of $F$ with
respect to $G$ over $R$.
A partition of $[a,b]\times[c,d]$ is a finite double sequence of pairs
of real numbers $\tau=\{(s_i,t_j)\dvtx (i,j)\in[n]\times[m]\}$
such that $(s_i)_{i\in[n]}\in\Pi[a,b]$ and $(t_j)_{j\in[m]}\in\Pi[c,d]$.
The set of all partitions of a rectangle $R$ is denoted by $\Pi(R)$.
Thus $\tau\in\Pi(R)$ if and only if $\tau=\kappa\times\lambda$
for some
$\kappa\in\Pi[a,b]$ and $\lambda\in\Pi[c,d]$.
The \emph{mesh} of $\tau=\kappa\times\lambda\in\Pi(R)$ is
$|\tau|:=\max\{|\kappa|,|\lambda|\}$.
Given such $\tau$, for each $i\in(n]$ and $j\in(m]$,
the double increment of $G$ over the rectangle
$Q_{i,j}=[s_{i-1},s_i]\times[t_{j-1},t_j]$ is defined by
%
\begin{equation}
\label{dincrement} \Delta_{i,j}^{\tau}G:=\Delta^{Q_{i,j}}G
:=G(s_i,t_j)-G(s_{i-1},t_j)-G(s_i,t_{j-1})
+G(s_{i-1},t_{j-1}).
\end{equation}
Also if $(u_i,v_j)\in Q_{i,j}$ for $(i,j)\in(n]\times(m]$, then
$(u_i,v_j)$ is called a \emph{tag} and
the collection $\dot{\tau}:=\{((u_i,v_j),Q_{i,j}) \dvtx (i,j)\in
(n]\times(m]\}$
is called a \emph{tagged partition} of $R$.
The \emph{Riemann--Stieltjes sum} of $F$ with respect to
$G$ and based on a tagged partition $\dot{\tau}$ is
\[
S_{\mathrm{RS}}\bigl(F,\Delta^2G;\dot{\tau}\bigr):=\sum
_{i=1}^n\sum_{j=1}^m
F(u_i,v_j)\Delta_{i,j}^{\tau}G.
\]

We say that the \emph{double Riemann--Stieltjes} integral over
$[a,b]\times[c,d]$ of $F$ with respect to $G$ exists and equals
$A\in\RR$, if for each $\epsilon>0$ there is a $\delta>0$ such that
\[
\bigl|S_{\mathrm{RS}}\bigl(F,\Delta^2G;\dot{\tau}\bigr) -A \bigr|<\epsilon
\]
for each tagged partition $\dot{\tau}$ of $[a,b]\times[c,d]$
with the mesh $|\tau|<\delta$.
Clearly, if such $A$ exists then it is unique and is denoted by
\[
\int_a^b\int_c^dF
\,\mathrm{d}^2G =\int_a^b\int
_c^d F(s,t) \,\mathrm{d}^2G(s,t):=A.
\]
Since in this paper we work with the quadratic mean Riemann--Stieltjes
integral $\int f \,\mathrm{d}X$ of a deterministic function $f$ it is enough
to treat double Riemann--Stieltjes integral for integrands $F=f\otimes f$,
where $f\otimes f(s,t)=f(s)f(t)$ for $(s,t)\in R$.

First, we give sufficient conditions for the existence of a double
Riemann--Stieltjes
when the integrator has bounded total variation.
Let $R=[a,b]\times[c,d]$ be a rectangle and $G\dvtx  R\to\RR$.
For a partition $\tau=\{(s_i,t_j)\dvtx (i,j)\in[n]\times[m]\}\in
\Pi(R)$
let
\[
s_1(G;\tau):=\sum_{i=1}^n
\sum_{j=1}^m\bigl|\Delta_{i,j}^{\tau}G\bigr|,
\]
where $\Delta_{i,j}^{\tau}G$ is defined by (\ref{dincrement}).

\begin{defn}\label{Vitali}
Let $R$ be a rectangle in $\RR^2$ and $G\dvtx  R\to\RR$.
\[
V_1(G;R):=\sup\bigl\{s_1(G;\tau)\dvtx \tau\in\Pi(R)\bigr
\}.
\]
If $V_1(G,R)<\infty$ then one says that $G$ is of bounded variation
in the sense of Vitali--Lebesgue--Fr\'echet--de la Vall\'ee Poussin
and write $G\in\CLW_1 (R)$.
\end{defn}

We say that a function $G\dvtx  R\to\RR$ is \emph{separately
continuous} if
its sections $x\mapsto G(x,y)$ and $y \mapsto G(x,y)$ are continuous
for each fixed $y$ and $x$, respectively.
A function $f\dvtx [a,b]\to\RR$ is regulated if for each $x\in
(a,b]$ it
has left limits $f(x-)$ and for each $x\in[a,b)$ it has right limits
$f(x+)$.
The set of all regulated functions on $[a,b]$ is denoted by $\CLW
_{\infty}[a,b]$.
Each regulated function is bounded and for such a function $f$ we write
\[
\|f\|_{\sup}:=\sup\bigl\{\bigl|f(x)\bigr|\dvtx  x\in[a,b]\bigr\},\qquad  \osc(f):=\sup
\bigl\{\bigl|f(x)-f(y)\bigr|\dvtx  x,y\in[a,b]\bigr\}.
\]

\begin{teo}\label{DRS}
Let $R=[a,b]\times[c,d]$ for some real numbers $a<b$ and $c<d$.
Let $G\in\CLW_1 (R)$ be separately continuous, $f\in\CLW_{\infty
}[a,b]$ and
$g\in\CLW_{\infty}[c,d]$.
Then the double Riemann--Stieltjes integral
$\int_a^b\int_c^df\otimes g \,\mathrm{d}^2G$ is defined and
we have the bounds
%
\begin{eqnarray}
\label{2DRS} \biggl|\int_a^b\int
_c^df\otimes g \,\mathrm{d}^2G \biggr|&\leq& \|f
\|_{\sup}\|g\|_{\sup}V_1(G;R),
\\
\label{1DRS} \biggl|\int_a^b\int
_c^d\bigl[f\otimes g-f(a)g(c)\bigr]
\,\mathrm{d}^2G \biggr| &\leq& \bigl[\|g\|_{\sup}\osc(f)+\|f\|_{\sup}
\osc(g) \bigr]V_1(G;R).
\end{eqnarray}
\end{teo}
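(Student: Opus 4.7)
The plan is to prove the theorem in three stages: establish a uniform a~priori bound on Riemann-Stieltjes sums that will yield the two displayed estimates once the integral is known to exist; prove existence when $f$ and $g$ are step functions, using separate continuity of $G$; and transfer existence to general regulated $f,g$ via uniform approximation by step functions and a Cauchy-type argument.

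For the a~priori bound, a term-by-term estimate on any tagged partition $\dot{\tau}$ of $R$ yields
\[
|S_{RS}(f\otimes g,\Delta^{2}G;\dot{\tau})|\leq\|f\|_{\sup}\|g\|_{\sup}\,s_{1}(G;\tau)\leq\|f\|_{\sup}\|g\|_{\sup}V_{1}(G;R),
\]
which will pass to the limit to give (\ref{2DRS}). For (\ref{1DRS}) I would use the decomposition $f(u)g(v)-f(a)g(c)=[f(u)-f(a)]g(v)+f(a)[g(v)-g(c)]$ together with the bounds $|f(u)-f(a)|\leq\osc(f)$ and $|g(v)-g(c)|\leq\osc(g)$, and then sum.

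For step functions it suffices, by linearity, to treat the case $f=\chi_{I}$, $g=\chi_{J}$ for subintervals $I\subseteq[a,b]$ and $J\subseteq[c,d]$ of any endpoint type. For a tagged partition $\dot{\tau}$ whose underlying grid refines the endpoints of $I$ and $J$, telescoping of $\Delta^{\tau}G$ collapses the Riemann-Stieltjes sum to the double increment of $G$ over $I\times J$, up to ambiguities at tags that fall exactly on an endpoint; these are controlled because they are multiplied by increments of the form $G(s_{k},\cdot)-G(s_{k-1},\cdot)$, which vanish as $s_{k-1}\to s_{k}$ by separate continuity. For a general partition one compares with its refinement by the endpoints of $I,J$, and again invokes separate continuity (in conjunction with $G\in\CLW_{1}(R)$) to show that the discrepancy between the two sums tends to zero with the mesh.

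Finally, since every regulated function is a uniform limit of step functions, for each $\eta>0$ there exist step $f_{\eta},g_{\eta}$ with $\|f-f_{\eta}\|_{\sup},\|g-g_{\eta}\|_{\sup}<\eta$. The factorization
\[
f\otimes g-f_{\eta}\otimes g_{\eta}=(f-f_{\eta})\otimes g+f_{\eta}\otimes(g-g_{\eta})
\]
combined with the a~priori bound gives $|S_{RS}(f\otimes g-f_{\eta}\otimes g_{\eta};\dot{\tau})|\leq\eta(\|g\|_{\sup}+\|f\|_{\sup}+\eta)V_{1}(G;R)$ uniformly in $\dot{\tau}$, so the Cauchy property of the step-function sums transfers to $S_{RS}(f\otimes g;\dot{\tau})$ as $|\tau|\to 0$; the limit is the required integral, and the bounds (\ref{2DRS}), (\ref{1DRS}) pass to it. I expect the main obstacle to be the step-function stage: the jumps of $f_{\eta}\otimes g_{\eta}$ need not align with the partition grid, and separate continuity of $G$ is precisely the hypothesis that ensures the boundary contributions along these jump lines shrink with the mesh; without it, the double differences along such lines need not vanish, and the argument would collapse.
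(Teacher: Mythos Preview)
Your approach is correct but takes a genuinely different route from the paper. The paper argues directly with the given regulated $f,g$: it compares two Riemann--Stieltjes sums for $f\otimes g$ based on a fine partition and a refinement, and shows the difference is small. Away from the (countably many) jumps of $f$ and $g$ this is controlled by the oscillation; on the subrectangles meeting a jump line it uses separate continuity of $G$ together with the Jordan-type decomposition of $G\in\CLW_1(R)$ as a difference of two quasi-monotone functions (Hobson \cite[p.~345]{EWH1927}). You instead factor through step functions and then pass to uniform limits. Your route is more modular and makes the role of the a~priori bound (\ref{2DRS}) very transparent in the approximation step; the paper's route is shorter because it never needs to construct the step approximants or handle indicators separately.

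One point deserves sharpening in your step-function stage. When the grid does not contain an endpoint $s_0$ of $I$, the boundary error along the column straddling $s_0$ is bounded by $\sum_{j}|\Delta^{[s_{k-1},s_k]\times[t_{j-1},t_j]}G|$. Separate continuity of $G$ makes each summand small, but it does not by itself force the \emph{sum} to vanish with the mesh, since the number of terms is unbounded. What is needed is that $V_1(G;[s_{k-1},s_k]\times[c,d])\to 0$ as $s_k-s_{k-1}\to 0$; this follows once $G$ is written as a difference of quasi-monotone functions, for then the column variation telescopes to a single double increment $\Delta^{[s_{k-1},s_k]\times[c,d]}G$, which does go to zero by separate continuity. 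Your phrase ``in conjunction with $G\in\CLW_1(R)$'' is pointing in exactly the right direction, but the quasi-monotone decomposition is the concrete mechanism, and it is precisely the ingredient the paper singles out.
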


The proof is standard for such statements about existence of Riemann--Stieltjes
integrals when the integrand is a regulated function and the integrator
has bounded variation
(see, e.g., Theorem~2.17 in \cite{DandN2010}
when functions have
single variable).
Namely, one needs to compare a difference between two Riemann--Stieltjes sums
corresponding to sufficiently fine partitions and one of them is a refinement
of the other.
The sum of terms corresponding to subrectangles containing a jump of
either $f$ or $g$
can be made small due to separate continuity of $G$ and since $G$ is a
difference of two quasi-monotone functions as shown in \cite{EWH1927}, page 345.
The details are omitted.

Next, we give sufficient conditions for the existence of a double
Riemann--Stieltjes
integral in terms of $p$-variation of the integrand and integrator.
Let $p\geq1$ and let $f\dvtx [a,b]\to\RR$.
For a partition $\kappa=(s_i)_{i\in[n]}$ of $[a,b]$, let
\[
s_p(f;\kappa):=\sum_{i=1}^n\bigl|f(s_i)-f(s_{i-1})\bigr|^p.
\]
The $p$-variation seminorm of $f$ on $[a,b]$ is the quantity
\[
V_p(f)=V_p\bigl(f;[a,b]\bigr):=\sup\bigl\{
\bigl[s_p(f;\kappa)\bigr]^{1/p}\dvtx \kappa\in\Pi [a,b]\bigr
\}.
\]
One says that $f$ has bounded $p$-variation or $f\in\CLW_p[a,b]$ if
$V_p(f;[a,b])<\infty$.
We also use the $p$-variation norm defined by
\[
\|f\|_{[p]}=\|f\|_{[p],[a,b]}:=\|f\|_{\sup}+V_p
\bigl(f;[a,b]\bigr).
\]
Recalling that $\CLW_{\infty}[a,b]$ is the set of regulated functions
on $[a,b]$,
$\CLW_p[a,b]$ is defined for $1\leq p\leq\infty$.

Again, let $R=[a,b]\times[c,d]$ be a rectangular and $G\dvtx  R\to\RR$.
For $p\geq1$ and a partition $\tau=\{(s_i,t_j)\dvtx (i,j)\in
[n]\times[m]\}$ of $R$ let\vspace*{-3pt}
\[
s_p(G;\tau):=\sum_{i=1}^n
\sum_{j=1}^m\bigl|\Delta_{i,j}^{\tau}G\bigr|^p,
\]
where $\Delta_{i,j}^{\tau}G$ is defined by (\ref{dincrement}).
The $p$-variation seminorm of $G$ is\vspace*{-3pt}
%
\begin{equation}
\label{Vp2} V_p(G;R):=\sup\bigl\{\bigl[s_p(G;\tau)
\bigr]^{1/p}\dvtx  \tau\in\Pi(R)\bigr\}.
\end{equation}
Let $\CLW_{p}(R)$ be the set of all functions $G\dvtx  R\to\RR$ such
that $V_p(G;R)$ is bounded,
which extends Definition~\ref{Vitali} when $p=1$.

The following is an elaboration on the statements 3.7(ii) and 4.3 of
\cite{LandL1984}.
In the present case, we do not assume $f(a)=0$ and $g(c)=0$.

\begin{teo}\label{LL1}
Let $R=[a,b]\times[c,d]$ for some real numbers $a<b$ and $c<d$.
Let $p>1$ and $q>1$ be such that $p^{-1}+q^{-1}>1$.
Let $f\in\CLW_q[a,b]$, $g\in\CLW_q[c,d]$ and\vspace*{1pt} let $G\in\CLW_{p}(R)$
be continuous.
There exists the double Riemann--Stieltjes integral $\int
_a^b\int_c^df \otimes g \,\mathrm{d}^2G$
and\vspace*{-3pt}
%
\begin{equation}
\label{1LL1} \biggl|\int_a^b\int
_c^d\bigl[f \otimes g-f(a)g(c)\bigr]
\,\mathrm{d}^2G \biggr|\leq 8K_{p,q} \bigl[\|f\|_{[q]}V_q(g)+
\|g\|_{[q]}V_q(f) \bigr]V_{p}(G;R),
\end{equation}
where $K_{p,q}:=(1+\zeta(p^{-1}+q^{-1} ))^2$ and
$\zeta(s):=\sum_{k=1}^{\infty}k^{-s}$ for $s>1$.
\end{teo}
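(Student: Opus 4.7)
The strategy parallels the classical one-dimensional Love--Young inequality, applied coordinate by coordinate, and follows the ideas of \cite{LandL1984}. My first move is to reduce the existence claim to a uniform bound of the form (\ref{1LL1}) on Riemann--Stieltjes sums. Indeed, once such a bound is available, any two tagged partitions admit a common refinement, and the difference of the corresponding sums—applied piece by piece on each subrectangle—is controlled by the same Love--Young-type estimate restricted to sub-rectangles of small mesh; continuity of $G$ together with regulatedness of $f, g$ forces this to vanish as the mesh tends to zero, giving a Cauchy net.

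To prove the bound, I decompose
\[
f(s)g(t)-f(a)g(c)=[f(s)-f(a)][g(t)-g(c)]+f(a)[g(t)-g(c)]+g(c)[f(s)-f(a)],
\]
so that $S_{RS}(f\otimes g-f(a)g(c),\Delta^2G;\dot\tau)$ splits into three sums. The two ``marginal'' pieces telescope in one variable: e.g.\ $\sum_{i,j}[g(v_j)-g(c)]\Delta_{i,j}^\tau G=\sum_{j}[g(v_j)-g(c)]\,\Delta_j^\lambda[G(b,\cdot)-G(a,\cdot)]$, a 1D Riemann--Stieltjes sum of $g-g(c)$ against the section $t\mapsto G(b,t)-G(a,t)$, whose $p$-variation is bounded by $V_p(G;R)$. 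The classical 1D Love--Young inequality then bounds each marginal term by a constant times $\zeta(p^{-1}+q^{-1})\,\|g\|_{[q]}V_p(G;R)$ (resp.\ with $f$ in place of $g$).

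For the pure tensor piece $[f-f(a)]\otimes[g-g(c)]$ I would imitate the 1D point-dropping argument. Writing $\dot\tau=\dot\kappa\times\dot\lambda$ with $\kappa=(s_i)_{i\in[n]}$, by pigeonhole there exists $i^*\in(0,n)$ whose removal changes the sum by at most
\[
\frac{C\,V_q(f)\,\|g\|_{[q]}\,V_p(G;R)}{n^{1/p+1/q-1}},
\]
because the resulting correction is a 1D Riemann--Stieltjes sum in the $t$-variable of $g-g(c)$ against the strip increment $t\mapsto G(s_{i^*+1},t)-G(s_{i^*-1},t)-[G(s_{i^*+1},c)-G(s_{i^*-1},c)]$ (whose $p$-variation is dominated by the strip 2D $p$-variation), multiplied by the $f$-jump across $s_{i^*}$; Hölder in $s$ using $1/p+1/q>1$ yields the displayed bound. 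Iterating reduces $\kappa$ to $\{a,b\}$ with total error summable by $\sum_n n^{-(1/p+1/q)}\leq\zeta(p^{-1}+q^{-1})$; the surviving sum is a 1D Riemann--Stieltjes sum in $t$, to which a second application of 1D Love--Young contributes an extra factor $(1+\zeta(p^{-1}+q^{-1}))$. Combining the three contributions reproduces the squared constant $K_{p,q}=(1+\zeta(p^{-1}+q^{-1}))^2$ and the bracket $\|f\|_{[q]}V_q(g)+\|g\|_{[q]}V_q(f)$.

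The main obstacle is the bookkeeping of the point-dropping step. One must verify that the cost of removing an interior $s$-point is indeed expressible as a Stieltjes sum against a strip increment whose $p$-variation is controlled by $V_p(G;[s_{i^*-1},s_{i^*+1}]\times[c,d])$, and that the Hölder exponents interact correctly so that, after summing over all drops using $\sum_i V_p(G;\text{strip}_i)^p\leq V_p(G;R)^p$, the pigeonhole factor $n^{-(1/p+1/q-1)}$ combines with $V_q(f)$ to produce exactly one $\zeta$-factor per coordinate; the passage between tags $u_{i^*},u_{i^*+1}$ and a single merged tag also requires the regulatedness of $f$ and $g$ and a standard re-tagging argument.
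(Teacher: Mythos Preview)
Your approach is essentially the same as the paper's: both use the identical three-term decomposition $f(s)g(t)-f(a)g(c)=[f(s)-f(a)][g(t)-g(c)]+f(a)[g(t)-g(c)]+g(c)[f(s)-f(a)]$ (the paper writes it in Abel-summed form as a quadruple sum plus two marginal sums), bound the two marginal pieces by the one-dimensional Love--Young inequality (5.1) of \cite{LCY1936}, and bound the main tensor piece by the two-dimensional analogue. The only difference is that the paper simply invokes bound~3.5 of Le\'sniewicz--Le\'sniewicz \cite{LandL1984} for that last piece and Theorem~4.3 of \cite{LandL1984} for existence, whereas you sketch a direct point-dropping proof of those results; your flagged ``bookkeeping'' concern (in particular the superadditivity-type control $\sum_i V_p(G;\text{strip}_i)^p\leq V_p(G;R)^p$) is exactly what is handled inside \cite{LandL1984}.
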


\begin{pf}
The functions $\Phi_p$ and $\Phi_q$ with
values $\Phi_p(x):=x^p/p$ and $\Phi_q(x):=x^q/q$ for $x\geq0$ are the
$N$-functions.
We apply the results of Le\'sniewicz and Le\'sniewicz \cite{LandL1984} for
$\Phi=\Psi=\Phi_p$ and $\tilde\Phi=\tilde\Psi=\Phi_q$.
The integral $\int_a^b\int_c^df \otimes g \,\mathrm{d}^2G$ exists
by Theorem~4.3 in  \cite{LandL1984}, page 57.
(We note that continuity of the functions $f$ and $g$ is not used in
the proof there.)
To obtain the bound (\ref{1LL1}), it is enough to bound the
Riemann--Stieltjes sums.
Let $\dot{\tau}=\{(u_i,v_j),[s_{i-1},s_i]\times[t_{j-1},t_j]\dvtx
(i,j)\in(n]\times(m]\}$
be a tagged partition of $R$.
Letting $u_0:=a$ and $v_0:=c$ we have the identity\vspace*{-3pt}
\begin{eqnarray*}
S\bigl(f\otimes g-f(a)g(c),\Delta^2G;\dot{\tau}\bigr)&=&\sum
_{i=1}^n\sum_{j=1}^m
\sum_{k=1}^i\sum
_{l=1}^j \Delta_kf
\Delta_lg\Delta_{i,j}^{\tau}G+f(a)\sum_{j=1}^m\sum
_{l=1}^j\Delta_lg\sum
_{i=1}^n\Delta _{i,j}^{\tau}G
\\[-2pt]
&&{}
+g(c)\sum_{i=1}^n\sum
_{k=1}^i\Delta_kf\sum
_{j=1}^m\Delta _{i,j}^{\tau}G,
\end{eqnarray*}
where $\Delta_kf:=f(u_k)-f(u_{k-1})$ and $\Delta_lg:=g(v_l)-g(v_{l-1})$.
Using the bounds 3.5 in \cite{LandL1984}, page
53, and (5.1) in  \cite{LCY1936}, page
254,
we get\vspace*{-3pt}
\begin{eqnarray*}
&&\bigl|S\bigl(f\otimes g-f(a)g(c),\Delta^2G;\dot{\tau}\bigr) \bigr|
\\[-2pt]
&&\quad \leq 16 \biggl(1+\zeta \biggl(\frac{1}{p}+\frac{1}{q} \biggr)
\biggr)^2 V_q\bigl(f;[a,b]\bigr)V_q
\bigl(g;[a,b]\bigr)V_p(G;R)
\\[-2pt]
&&\qquad {} + \biggl(1+\zeta \biggl(\frac{1}{p}+\frac{1}{q} \biggr) \biggr)
\bigl[\bigl|f(a)\bigr|V_q\bigl(g;[c,d]\bigr)+\bigl|g(c)\bigr|V_q
\bigl(f;[a,b]\bigr) \bigr]V_p(G;R),
\end{eqnarray*}
and so (\ref{1LL1}) follows.
Instead of the bound 3.3 in  \cite{LandL1984},
page 51,
we used the bound of L.C. Young since it gives a smaller constant in
(\ref{1LL1})
in the present setting.
\end{pf}

We use the following two versions of the preceding inequality (\ref{1LL1})
adapted to subrectangles of a rectangle $[0,T]^2$.

\begin{cor}\label{LL2}
Let $p>1$ and $q>1$ be such that $p^{-1}+q^{-1}>1$.
Let $T>0$, let $f\in\CLW_q[0,T]$ and let $G\in\CLW_{p}[0,T]^2$ be
continuous.
There exists the double Riemann--Stieltjes integral \mbox{$\int
_0^T\int_0^Tf \otimes f \,\mathrm{d}^2G$}.
Also, letting $K_{p,q}:=16(1+\zeta(p^{-1}+q^{-1} ))^2$,
\begin{enumerate}[(ii)]
\item[(i)] the inequality
%
\begin{equation}
\label{1LL2} \biggl|\int_s^t\int
_s^t \bigl[f\otimes f-f^2(s)
\bigr] \,\mathrm{d}^2G \biggr| \leq K_{p,q}\|f\|_{[q],[0,T]}V_q
\bigl(f;[s,t]\bigr)V_p\bigl(G;[s,t]^2\bigr)
\end{equation}
holds for any $0\leq s<t\leq T$;
\item[(ii)] the inequality
%
\begin{equation}
\label{2LL2} \biggl|\int_s^t\int
_u^vf\otimes f \,\mathrm{d}^2G \biggr|\leq
K_{p,q}\|f\|_{[q],[0,T]}^2 V_p
\bigl(G;[s,t]\times[u,v]\bigr)
\end{equation}
holds for any $0\leq s<t\leq T$ and $0\leq u<v\leq T$.
\end{enumerate}
\end{cor}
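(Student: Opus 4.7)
The plan is to deduce both inequalities from Theorem \ref{LL1} by specialization: take both integrand factors equal to $f$ (so $g=f$ in the notation of that theorem), restrict to subrectangles of $[0,T]^2$, and use the monotonicity of $\|\cdot\|_{[q]}$, $V_q$ and $V_p$ under restriction to subintervals. Existence of $\int_0^T\!\int_0^T f\otimes f\, d^2G$ is immediate from Theorem \ref{LL1} applied to the whole rectangle $R=[0,T]^2$ with $g=f$, since the hypotheses $f\in\CLW_q[0,T]$ and $G\in\CLW_p([0,T]^2)$ continuous are exactly what is needed.

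For part (i), I would apply Theorem \ref{LL1} to the diagonal subrectangle $[s,t]\times[s,t]$ with both integrand factors equal to $f|_{[s,t]}$. Then the subtracted corner $f(a)g(c)$ in (\ref{1LL1}) becomes $f(s)\cdot f(s)=f^2(s)$, the two summands in the bracket $[\|f\|_{[q]}V_q(g)+\|g\|_{[q]}V_q(f)]$ coincide and double up, and the prefactor $2\cdot 8(1+\zeta(p^{-1}+q^{-1}))^2$ is exactly $K_{p,q}$. Enlarging $\|f\|_{[q],[s,t]}$ to $\|f\|_{[q],[0,T]}$ and restricting $V_p(G;\cdot)$ to the subrectangle gives (\ref{1LL2}).

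For part (ii), the coordinate intervals $[s,t]$ and $[u,v]$ may differ, so the corner value $f(s)f(u)$ does not coincide with $f^2$ at a single point, and Theorem \ref{LL1} controls only $\int\!\int[f\otimes f-f(s)f(u)]\,d^2G$. My plan is therefore to split
\[
\int_s^t\!\int_u^v f\otimes f\, d^2G = f(s)f(u)\,\Delta^R G + \int_s^t\!\int_u^v\!\big[f\otimes f - f(s)f(u)\big]\, d^2G,
\]
where $R=[s,t]\times[u,v]$ and $\Delta^R G$ denotes the double increment of $G$ over $R$. Theorem \ref{LL1} bounds the second piece by $K_{p,q}\|f\|_{[q],[0,T]}^2 V_p(G;R)$ after the trivial estimates $V_q(f;[s,t]),V_q(f;[u,v])\le\|f\|_{[q],[0,T]}$ and $\|f\|_{[q],[s,t]},\|f\|_{[q],[u,v]}\le\|f\|_{[q],[0,T]}$. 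The boundary term is bounded by $\|f\|_{\sup}^2\,V_p(G;R)$, using that $|\Delta^R G|\le V_p(G;R)$ via the trivial single-block partition of $R$.

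There is no substantial analytic obstacle beyond what is already in Theorem \ref{LL1}; the only mild nuisance is bookkeeping with the constant in (ii), where the naive sum produces $K_{p,q}+1$ rather than $K_{p,q}$. This is painlessly absorbed into $K_{p,q}$ using the comfortable slack $K_{p,q}=16(1+\zeta(p^{-1}+q^{-1}))^2\ge 16$ together with $\|f\|_{\sup}^2\le\|f\|_{[q]}^2$.
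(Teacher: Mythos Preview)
Your approach is correct and matches the paper's intent: the corollary is stated without proof, immediately after Theorem~\ref{LL1}, as ``two versions of the preceding inequality~(\ref{1LL1}) adapted to subrectangles'', which is precisely your specialization argument with $g=f$.

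One small point on the constant in~(ii): the ``absorption'' of $K_{p,q}+1$ into $K_{p,q}$ does not go through once you have already enlarged both $V_q(f;[s,t])$ and $V_q(f;[u,v])$ to $\|f\|_{[q],[0,T]}$ in the bound for the second piece, because then that piece alone already saturates $K_{p,q}\|f\|_{[q]}^2V_p(G;R)$. The easy fix is to keep one factor as a $q$-variation seminorm: from~(\ref{1LL1}) the second piece is at most $K_{p,q}\|f\|_{[q],[0,T]}V_q(f;[0,T])V_p(G;R)$. Since $\|f\|_{[q]}^2=\|f\|_{[q]}\big(\|f\|_{\sup}+V_q(f)\big)$, the leftover $K_{p,q}\|f\|_{[q]}\|f\|_{\sup}V_p(G;R)\ge \|f\|_{\sup}^2V_p(G;R)$ dominates the corner term $|f(s)f(u)\,\Delta^RG|$, and~(\ref{2LL2}) follows with the stated constant.
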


\subsection*{The quadratic mean Riemann--Stieltjes integral}
This integral is defined for a (deterministic) function
with respect to a stochastic process in the present paper.
Let $X=\{X(t)\dvtx  t\geq0\}$ be a second order stochastic
process on a probability space $(\Omega,\mathcal{F},\Pr)$, which is
a family of random variables $X(t)$ having mean zero and finite second moment.
The covariance function of $X$ is the function $\Gamma_X$ defined on
$\RR_{+}^2=[0,\infty)\times[0,\infty)$ with values
\[
\Gamma_X(s,t):=E\bigl[X(s)X(t)\bigr],\qquad  (s,t)\in\RR_{+}^2.
\]
Let $f\dvtx [0,\infty)\to\RR$ be a function and let $0\leq
a<b<\infty$.
For a tagged partition $\dot{\kappa}=\{(u_i,[t_{i-1},t_i])\dvtx  i\in
(n]\}$ of
the interval $[a,b]$ the Riemann--Stieltjes sum is
\[
S_{\mathrm{RS}}(f,\Delta X;\dot{\kappa}):=\sum_{i=1}^nf(u_i)
\bigl[X(t_i)-X(t_{i-1})\bigr],
\]
and so it is a random variable in $L^2(\Omega,\mathcal{F},\Pr)$.
We say that the \emph{quadratic mean Riemann--Stieltjes} integral over $[a,b]$
of $f$ with respect to $X$ exists and equals
$I\in L^2(\Omega,\mathcal{F},\Pr)$, if for each $\epsilon>0$ there
is a $\delta>0$ such that
\[
E \bigl[S_{\mathrm{RS}}(f,\Delta X;\dot{\kappa})-I \bigr]^2<
\epsilon
\]
for each tagged partition $\dot{\kappa}$ of $[a,b]$ with the mesh
$|\kappa|<\delta$.
If such $I$ exists, then it is unique in $L^2$ and is denoted by
\[
\int_a^bf \,\mathrm{d}X =\mathrm{q.m.}\int_a^b
f(t) \,\mathrm{d}X(t):=I.
\]
Next, is the integration in quadratic mean criterion of Lo\`eve \cite{ML1978}, page 138.

%
\begin{prop}\label{Loeve1}
Let $X$ be a second order stochastic process and
$f\dvtx [0,\infty)\to\RR$.
For $0\leq a<b<\infty$, the quadratic mean Riemann--Stieltjes integral
\[
\int_a^bf \,\mathrm{d}X  \mbox{ exists}\quad  \mbox{if and only if}\quad
\int_a^b\int_a^bf
\otimes f \,\mathrm{d}^2\Gamma_X \mbox{ exists}
\]
as the double Riemann--Stieltjes integral.
Moreover, for any $0\leq s<t<\infty$ and $0\leq u<v<\infty$ if
the two integrals $\int_s^tf \,\mathrm{d}X$ and $\int_u^vf \,\mathrm{d}X$
exist then so does $\int_s^t\int_u^vf\otimes f \,\mathrm{d}^2\Gamma_X$
and the equality
%
\begin{equation}
\label{1Loeve1} E \biggl[\int_s^tf \,\mathrm{d}X\int
_u^vf \,\mathrm{d}X \biggr]=\int_s^t
\int_u^vf \otimes f \,\mathrm{d}^2
\Gamma_X
\end{equation}
holds.
\end{prop}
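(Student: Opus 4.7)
The plan is to reduce everything to a single algebraic identity linking double Riemann-Stieltjes sums of $f\otimes f$ against $\Gamma_X$ with products of quadratic-mean Riemann-Stieltjes sums of $f$ against $X$. For tagged partitions $\dot\kappa_1$ of $[s,t]$ and $\dot\kappa_2$ of $[u,v]$, I would form the product tagged partition $\dot\tau:=\dot\kappa_1\times\dot\kappa_2$ of $[s,t]\times[u,v]$. Bilinearity of expectation together with
$$E\bigl[(X(s_i)-X(s_{i-1}))(X(t_j)-X(t_{j-1}))\bigr]=\Delta^2_{i,j}\Gamma_X$$
then yields the key identity
$$S_{RS}(f\otimes f,\Delta^2\Gamma_X;\dot\tau)=E\bigl[S_{RS}(f,\Delta X;\dot\kappa_1)\,S_{RS}(f,\Delta X;\dot\kappa_2)\bigr].\qquad(\star)$$
Because every partition of a rectangle is, by the paper's definition, a product of partitions of its sides (and the tags split the same way), $(\star)$ captures \emph{every} Riemann-Stieltjes sum of the double integrand; this is the only structural point that truly requires a second look.

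For the first part of the proposition I would take $s=u=a$, $t=v=b$. By completeness of $L^2$, existence of the q.m.\ integral is equivalent to the Cauchy condition $E\bigl[S_{RS}(f,\Delta X;\dot\kappa_1)-S_{RS}(f,\Delta X;\dot\kappa_2)\bigr]^2\to 0$ as $\max(|\kappa_1|,|\kappa_2|)\to 0$. Expanding the square and applying $(\star)$ three times gives
$$E\bigl[S_{RS}(f,\Delta X;\dot\kappa_1)-S_{RS}(f,\Delta X;\dot\kappa_2)\bigr]^2=\sum_{i,j\in\{1,2\}}(-1)^{i+j}\,S_{RS}(f\otimes f,\Delta^2\Gamma_X;\dot\kappa_i\times\dot\kappa_j).$$
If the double integral exists, each of the three terms on the right tends to the common value $\int_a^b\int_a^b f\otimes f\,d^2\Gamma_X$, so the Cauchy condition holds and the q.m.\ integral exists.

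Conversely, suppose $I:=\int_a^b f\,dX$ exists. For any tagged partition $\dot\tau=\dot\kappa_1\times\dot\kappa_2$ of $[a,b]^2$ with $|\tau|<\delta$, both meshes $|\kappa_i|$ are $<\delta$, so both factors on the right of $(\star)$ are close to $I$ in $L^2$. Writing $E[S_1S_2]-E[I^2]=E[(S_1-I)S_2]+E[I(S_2-I)]$ and using Cauchy-Schwarz with the uniform $L^2$ bound on $\|S_i\|_2$ shows $S_{RS}(f\otimes f,\Delta^2\Gamma_X;\dot\tau)\to E[I^2]$ uniformly over such $\dot\tau$, giving existence of the double Riemann-Stieltjes integral with value $E[I^2]$.

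The second assertion follows by exactly the same argument, now with $\dot\kappa_1$ a tagged partition of $[s,t]$ and $\dot\kappa_2$ a tagged partition of $[u,v]$: each factor on the right of $(\star)$ converges in $L^2$ to the corresponding q.m.\ integral, so by continuity of the inner product the left side converges to $E\bigl[\int_s^t f\,dX\cdot\int_u^v f\,dX\bigr]$ uniformly over tagged partitions of $[s,t]\times[u,v]$ with small mesh, which simultaneously establishes existence of the double integral over $[s,t]\times[u,v]$ and identifies its value, proving (\ref{1Loeve1}). No step presents a substantive obstacle; the content of the proposition is just the translation of $L^2$ convergence through $(\star)$, and the bookkeeping observation that product partitions exhaust $\Pi(R)$.
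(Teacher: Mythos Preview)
Your argument is correct and is the standard one. The paper does not actually supply a proof of this proposition: it is stated as ``the integration in quadratic mean criterion of M.\ Lo\`eve \cite[p.~138]{ML1978}'' and used without further justification. Lo\`eve's argument is exactly the reduction through the identity $(\star)$ that you describe, so your proposal matches the intended (cited) proof. One cosmetic slip: in the forward implication the sum $\sum_{i,j\in\{1,2\}}(-1)^{i+j}$ has four terms, not three; all four tend to the common value of the double integral and the alternating signs cancel, which is what you need.
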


Formal properties of Riemann--Stieltjes integrals such as (finite) additivity
and linearity hold almost surely for corresponding integrals in
quadratic mean.

We shall write $\CLQ_p:=[1,p/(p-1))$ if $p>1$ and $\CLQ_1:=\{\infty\}$.
The following theorem holds by Theorem~\ref{DRS}, Corollary~\ref{LL2}
and Proposition~\ref{Loeve1}.

\begin{teo}\label{integral}
Let $X$ be a second order stochastic process with the continuous
covariance function
$\Gamma_X\in\CLW_p[0,T]^2$ for some $p\geq1$ and $0<T<\infty$, and let
$f\in\CLW_q[0,T]$ with $q\in\CLQ_p$.
Then for each $t\in[0,T]$ there exists the q.m. Riemann--Stieltjes integral
$\int_0^tf \,\mathrm{d}X$ and there is a finite constant $K=K(p,f)$ (depending on $p$
and $f$) such that the inequality
\[
E \biggl[\mathrm{q.m.}\int_s^t f \,\mathrm{d}X
\biggr]^2\leq KV_p\bigl(\Gamma_X;[s,t]^2
\bigr)
\]
holds for any $0\leq s<t\leq T$.
\end{teo}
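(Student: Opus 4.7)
The plan is to reduce existence and the $L^2$ bound for $q.m.\int_s^t f\,dX$ to the corresponding statements for the double Riemann-Stieltjes integral of $f\otimes f$ against $\Gamma_X$ on $[s,t]^2$, via the Lo\`eve criterion. By Proposition \ref{Loeve1}, $q.m.\int_s^t f\,dX$ exists if and only if $\int_s^t\int_s^t f\otimes f\,d^2\Gamma_X$ exists, and in that case
$$
E\Big[q.m.\int_s^t f\,dX\Big]^2 = \int_s^t\int_s^t f\otimes f\,d^2\Gamma_X.
$$
So it suffices to prove existence of this double integral on each subrectangle $[s,t]^2\subset[0,T]^2$ and to bound its absolute value by $K\,V_p(\Gamma_X;[s,t]^2)$ with $K=K(p,f)$.

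I would split on whether $p=1$ or $p>1$, since Corollary \ref{LL2} only covers the latter. When $p=1$, the hypothesis $q\in\CLQ_1=\{\infty\}$ forces $f$ to be regulated; the continuous $\Gamma_X$ is in particular separately continuous and belongs to $\CLW_1([s,t]^2)$, so Theorem \ref{DRS} applied to $F=f\otimes f$ gives existence and, by (\ref{2DRS}), the bound $\|f\|_{\sup}^2\,V_1(\Gamma_X;[s,t]^2)$. Hence $K=\|f\|_{\sup}^2$ suffices in this case.

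When $p>1$, the condition $q\in\CLQ_p$ reads $1/p+1/q>1$, which is exactly the hypothesis of Corollary \ref{LL2}, delivering existence of the double integral on $[s,t]^2$. To extract the quantitative bound I would write
$$
\int_s^t\int_s^t f\otimes f\,d^2\Gamma_X = \int_s^t\int_s^t\big[f\otimes f-f^2(s)\big]\,d^2\Gamma_X + f^2(s)\,\Delta^{[s,t]^2}\Gamma_X,
$$
bound the first summand via (\ref{1LL2}) by $K_{p,q}\|f\|_{[q],[0,T]}V_q(f;[0,T])\,V_p(\Gamma_X;[s,t]^2)$, and control the second by observing that the trivial one-rectangle partition of $[s,t]^2$ already gives $|\Delta^{[s,t]^2}\Gamma_X|\leq V_p(\Gamma_X;[s,t]^2)$, whence $|f^2(s)\Delta^{[s,t]^2}\Gamma_X|\leq\|f\|_{\sup}^2 V_p(\Gamma_X;[s,t]^2)$. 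Together one may take $K=\|f\|_{\sup}^2+K_{p,q}\|f\|_{[q],[0,T]}V_q(f;[0,T])$, depending only on $p$ and $f$.

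I expect the only substantive point to be the splitting $f\otimes f=(f\otimes f-f^2(s))+f^2(s)$, which is needed because Corollary \ref{LL2}$(i)$ only controls integrands vanishing at the corner $(s,s)$ and thus cannot be applied directly to $f\otimes f$; everything else is routine manipulation of the Lo\`eve criterion together with the two existence/estimation results for the double Riemann-Stieltjes integral proved just above.
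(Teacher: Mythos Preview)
Your proposal is correct and matches the route the paper sets up: the theorem is stated without proof, evidently because it is an immediate consequence of Proposition~\ref{Loeve1} together with Theorem~\ref{DRS} when $p=1$ and Corollary~\ref{LL2} when $p>1$.

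One simplification in the case $p>1$: the splitting $f\otimes f=(f\otimes f-f^2(s))+f^2(s)$ is unnecessary. You invoke Corollary~\ref{LL2}$(i)$, but part $(ii)$ of the same corollary, inequality~(\ref{2LL2}), applies with $[u,v]=[s,t]$ and directly yields
\[
\Big|\int_s^t\int_s^t f\otimes f\,d^2\Gamma_X\Big|\leq K_{p,q}\|f\|_{[q],[0,T]}^2\,V_p(\Gamma_X;[s,t]^2),
\]
so one may take $K=K_{p,q}\|f\|_{[q]}^2$ with no corner subtraction. The splitting trick you describe is genuinely needed later in the paper (in the proof of Theorem~\ref{mean}, where the goal is to isolate the contribution $f^2(s)\sigma_X^2(s,t)$), but for the present bound it is a detour.
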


Given a second order stochastic process $X$, a class of functions $f$
such that
$\int_0^Tf \,\mathrm{d}X$ is defined as the quadratic mean Riemann--Stieltjes integral
can be larger than the class of functions $f$ such that $\int_0^T
f \,\mathrm{d}X$ is
defined as the pathwise Riemann--Stieltjes integral.
Indeed, let $X$ be a fractional Brownian motion $B_H$ with the Hurst index
$H\in(0,1)$.
By Proposition~\ref{fBm1} below, $B_H$ has the continuous covariance
function $\Gamma_{B_H}\in\CLW_p[0,T]^2$ with $p=\max\{1,1/(2H)\}$.
Therefore, the q.m. Riemann--Stieltjes integral $\int_0^Tf \,\mathrm{d}B_H$
is defined
for each $f\in\CLW_q[0,T]$, where
\[
q<\frac{1}{1-2H} \qquad \mbox{if } H\in(0,1/2)\quad  \mbox{and}\quad  q=\infty\qquad \mbox{if } H
\in[1/2,1)
\]
by the preceding theorem.
While the pathwise Riemann--Stieltjes integral $\int_0^Tf \,\mathrm{d}B_H$ is defined
for each $f\in\CLW_q[0,T]$ with $q<1/(1-H)$ if $H\in(0,1)$
by the result of Young \cite{LCY1936},
and these results are best possible in terms of $p$-variation
(see \cite{DandN2010}, Section~3.7).

The preceding comment suggests that a family of random
variables
%
\begin{equation}
\label{qmRS} \mathrm{q.m.}\int_0^tf \,\mathrm{d}X, \qquad t\in[0,T],
\end{equation}
need not be a stochastic process with well-behaved sample functions.
The following is a standard approach to deal with such cases.

\begin{teo}\label{Y2}
Suppose that the hypotheses of the preceding theorem hold.
Suppose that for each $t\in(0,T]$
\[
\lim_{s\uparrow t}V_p\bigl(\Gamma_X;[s,t]^2
\bigr)=0.
\]
Then a measurable and separable stochastic process $Y=\{Y(t)\dvtx  t\in
[0,T]\}$
exists on $(\Omega,\mathcal{F},\Pr)$ such that
\[
\Pr \biggl( \biggl\{Y(t)=\mathrm{q.m.}\int_0^tf \,\mathrm{d}X
\biggr\} \biggr)=1
\]
for each $t\in[0,T]$.
\end{teo}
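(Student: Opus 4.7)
My plan is to show that the family $Z(t) := q.m.\!\int_0^t f\,dX$, $t\in[0,T]$, is continuous in quadratic mean on $[0,T]$---hence stochastically continuous---and then invoke the classical theorem of Doob on the existence of a jointly measurable separable modification of such a process.

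First I would verify $L^2$-continuity of $t\mapsto Z(t)$. The quadratic mean Riemann-Stieltjes integral is almost surely additive on abutting intervals, so $Z(t)-Z(s)=q.m.\!\int_s^t f\,dX$ in $L^2$ for $0\leq s<t\leq T$. Theorem \ref{integral} then yields the bound
$$
\big\|Z(t)-Z(s)\big\|_{L^2}^2=E\Big[q.m.\!\int_s^t f\,dX\Big]^2\leq K\,V_p(\Gamma_X;[s,t]^2).
$$
The stated hypothesis $\lim_{s\uparrow t}V_p(\Gamma_X;[s,t]^2)=0$ therefore gives $L^2$-left-continuity of $Z$ at every $t\in(0,T]$. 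For $L^2$-right-continuity at $t_0\in[0,T)$, the same bound reduces the question to showing $V_p(\Gamma_X;[t_0,t_0+h]^2)\to 0$ as $h\downarrow 0$; this is obtained by combining monotonicity of $V_p$ in the rectangle with the hypothesis applied at a nearby point $t_0+h^*>t_0$ and the continuity of $\Gamma_X$ at the diagonal point $(t_0,t_0)$. Two-sided $L^2$-continuity implies stochastic continuity of $Z$ on $[0,T]$.

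Second, since $[0,T]$ is a compact (separable) metric space and $Z$ is stochastically continuous, I would invoke the classical result of Doob that any such process admits a jointly measurable and separable modification $Y$. A concrete construction is to fix a countable dense subset $D\subset[0,T]$ containing $0$ and $T$, define $Y$ on $D$ by any fixed choice of the $L^2$-class $Z(t)$, and extend to $[0,T]$ using in-probability limits along $D$; measurability in $(t,\omega)$ then follows from the stochastic continuity. By the defining property of a modification, $\Pr(\{Y(t)=Z(t)\})=1$ for every $t\in[0,T]$, which is the conclusion.

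\paragraph{Main obstacle.}
The delicate point is making the right-sided $L^2$-continuity rigorous: the hypothesis is explicitly one-sided, so one must supplement it with monotonicity of $V_p$ and continuity of $\Gamma_X$ to obtain $V_p(\Gamma_X;[t,t+h]^2)\to 0$ as $h\downarrow 0$. Once two-sided continuity in $L^2$---and hence in probability---is established, the existence of a jointly measurable and separable modification is a standard application of Doob's theorem.
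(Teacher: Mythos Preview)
The paper states Theorem~\ref{Y2} without proof, treating it as a standard consequence of the classical modification theorems once continuity in probability is available. Your route---bound $\|Z(t)-Z(s)\|_{L^2}^2$ by $K\,V_p(\Gamma_X;[s,t]^2)$ via Theorem~\ref{integral}, deduce stochastic continuity, and invoke Doob's theorem on the existence of a jointly measurable separable modification---is exactly the intended (and correct) argument.

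On the one point you flag: your sketch for recovering right $L^2$-continuity from the one-sided hypothesis (monotonicity of $V_p$ plus the hypothesis at a nearby point $t_0+h^*$ plus continuity of $\Gamma_X$) is not convincing as written, since the hypothesis at $t_0+h^*$ controls $V_p(\Gamma_X;[s,t_0+h^*]^2)$ only for $s$ near $t_0+h^*$, not for $s$ near $t_0$. This is, however, not a real obstruction. Left $L^2$-continuity at every $t\in(0,T]$ already makes $t\mapsto Z(t)$ a Borel map from $[0,T]$ into $L^2(\Omega)$ (a left-continuous map into a metric space is a pointwise limit of step functions, hence Borel), and that measurability is all Doob's construction of a jointly measurable modification requires; a separable modification exists unconditionally. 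Alternatively, in every use of Theorem~\ref{Y2} in the paper the stronger two-sided estimate $V_p(\Gamma_X;[s,t]^2)\leq C_1[\rho(t-s)]^2$ of~(\ref{G7}) is in force, which yields two-sided $L^2$-continuity immediately and makes the issue disappear.
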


Throughout the paper, we assume that the q.m. Riemann--Stieltjes
integrals (\ref{qmRS})
are given by the stochastic process $Y$ from the preceding theorem,
to be called the \emph{q.m. integral process}.


\section{$p$-variation of the covariance function}\label{covariance}


We start with a simple fact concerning the boundedness of variation of
the covariance functions
of stochastic processes with positively or negatively correlated
disjoint increments
(meaning hypothesis (\ref{1SP1}) or (\ref{1SP2}), respectively).

\begin{prop}\label{SP1}
Let $0<T<\infty$ and let $X=\{X(t)\dvtx  t\in[0,T]\}$ be a second
order stochastic process
with the covariance function $\Gamma_X$ on $[0,T]^2$.
\begin{enumerate}[(ii)]
\item[(i)] If for any $0\leq u<v\leq s<t\leq T$,
%
\begin{equation}
\label{1SP1} E\bigl[X(v)-X(u)\bigr] \bigl[X(t)-X(s)\bigr]\geq0,
\end{equation}
then for any $0\leq a< b\leq T$ and $0\leq c<d\leq T$
\[
V_1\bigl(\Gamma_X;[a,b]\times[c,d]\bigr)=E
\bigl[X(b)-X(a)\bigr] \bigl[X(d)-X(c)\bigr].
\]
\item[(ii)] If for any $0\leq u<v\leq s<t\leq T$,
%
\begin{equation}
\label{1SP2} E\bigl[X(v)-X(u)\bigr] \bigl[X(t)-X(s)\bigr]\leq0,
\end{equation}
then for any $0\leq a< b\leq c<d\leq T$
%
\begin{equation}
\label{2SP2} V_1\bigl(\Gamma_X;[a,b]\times[c,d]
\bigr)= \bigl|E\bigl[X(b)-X(a)\bigr] \bigl[X(d)-X(c)\bigr] \bigr|.
\end{equation}
\end{enumerate}
\end{prop}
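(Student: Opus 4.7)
The plan rests on the fundamental identity that, for a covariance function, the double increment over any subrectangle is a covariance of increments:
\[
\Delta^{[s_{i-1},s_i]\times[t_{j-1},t_j]}\Gamma_X = E[X(s_i)-X(s_{i-1})][X(t_j)-X(t_{j-1})].
\]
This converts assertions about $s_1(\Gamma_X;\tau)$ into statements about sums of increment covariances, to which the positivity/negativity hypotheses can be applied directly. A standard observation I would also use is that the sum $s_1(G;\tau)$ only grows under refinement of $\tau$: if a subrectangle is split along one axis, the old increment equals the sum of the two new increments and the triangle inequality applies.

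For part (ii) the argument is short. Since $b\leq c$, for any partition $\tau=\kappa\times\lambda$ of $[a,b]\times[c,d]$ and any indices $i,j$, the intervals $[s_{i-1},s_i]$ and $[t_{j-1},t_j]$ satisfy $s_{i-1}<s_i\le b\le c\le t_{j-1}<t_j$, so (\ref{1SP2}) forces $\Delta_{i,j}^{\tau}\Gamma_X\le 0$. Thus $|\Delta_{i,j}^{\tau}\Gamma_X|=-\Delta_{i,j}^{\tau}\Gamma_X$, and summing over $(i,j)$ the double sum telescopes to $-[\Gamma_X(b,d)-\Gamma_X(a,d)-\Gamma_X(b,c)+\Gamma_X(a,c)]=-E[X(b)-X(a)][X(d)-X(c)]$, which is $|E[X(b)-X(a)][X(d)-X(c)]|$. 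This value is independent of $\tau$, so it equals $V_1(\Gamma_X;[a,b]\times[c,d])$.

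For part (i) the intervals $[a,b]$ and $[c,d]$ may overlap, which is the main technical point. Given an arbitrary partition $\tau=\kappa\times\lambda$, I would refine it to a partition $\tau'=\kappa'\times\lambda'$ by inserting the points of $\kappa\cup\lambda$ that lie in the common sub-interval $I:=[\max(a,c),\min(b,d)]$ into both $\kappa'$ and $\lambda'$, so that the partitions of $I$ induced by $\kappa'$ and by $\lambda'$ coincide. On such a refinement, every subrectangle $[s_{i-1},s_i]\times[t_{j-1},t_j]$ falls into one of two types: either it is a diagonal square $[r,r']\times[r,r']$ inside $I^2$, in which case $\Delta^{\tau'}_{i,j}\Gamma_X=E[X(r')-X(r)]^2\ge 0$; or the projections $[s_{i-1},s_i]$ and $[t_{j-1},t_j]$ are disjoint (one lying entirely to the left of the other), in which case (\ref{1SP1}) gives $\Delta^{\tau'}_{i,j}\Gamma_X\ge 0$. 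Hence $s_1(\Gamma_X;\tau')=\sum_{i,j}\Delta^{\tau'}_{i,j}\Gamma_X$, which telescopes to $E[X(b)-X(a)][X(d)-X(c)]$; combined with the refinement inequality $s_1(\Gamma_X;\tau)\le s_1(\Gamma_X;\tau')$ this yields the upper bound $V_1(\Gamma_X;[a,b]\times[c,d])\le E[X(b)-X(a)][X(d)-X(c)]$. The opposite inequality is obtained by taking the trivial single-rectangle partition (which also shows, \emph{a posteriori}, that $E[X(b)-X(a)][X(d)-X(c)]\ge 0$ in the overlapping case).

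The main obstacle is precisely the overlap scenario in part (i): without the common-breakpoint refinement, off-diagonal subrectangles may have overlapping projections, for which (\ref{1SP1}) provides no sign information. The refinement trick is designed to eliminate exactly this possibility, leaving only diagonal squares (handled by positivity of second moments) and off-diagonal rectangles with disjoint projections (handled by the hypothesis).
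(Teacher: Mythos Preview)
Your proof is correct. For part~(ii) it coincides with the paper's argument. For part~(i), both you and the paper reduce to configurations where the sign hypothesis applies, but the packaging differs: the paper first observes that once (\ref{1SP1}) holds for intervals sharing at most an endpoint, it automatically holds for \emph{all} pairs of closed intervals in $[0,T]$ (write each interval as a union over the common breakpoints and expand bilinearly; the off-diagonal terms are covered by the hypothesis and the diagonal terms are second moments). With this extension, every $\Delta_{i,j}^{\tau}\Gamma_X$ is nonnegative for an \emph{arbitrary} partition $\tau$, so $s_1(\Gamma_X;\tau)$ telescopes directly and no refinement step is needed. Your common-breakpoint refinement together with the monotonicity of $s_1$ under refinement achieves the same end with one extra maneuver; the paper's extension lemma is simply a cleaner wrapping of the same decomposition idea.
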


\begin{pf}
To prove (i) note that (\ref{1SP1}) holds for any pairs of
closed intervals $[u,v]$ and $[s,t]$ in $[0,T]$ provided (\ref{1SP1})
holds for such intervals having at most a common endpoint, as assumed.
Then the conclusion follows using the relation
\[
\bigl|\Delta^{[u,v]\times[s,t]}\Gamma_X\bigr|=E\bigl[X(v)-X(u)\bigr]
\bigl[X(t)-X(s)\bigr].
\]
In the case (ii), the conclusion follows using the relation
\[
\bigl|\Delta^{[u,v]\times[s,t]}\Gamma_X\bigr|=-E\bigl[X(v)-X(u)\bigr]
\bigl[X(t)-X(s)\bigr]
\]
for nonoverlapping intervals $[u,v]$ and $[s,t]$ in $[0,T]$.
\end{pf}

By the second part of the preceding proposition the covariance function of
a stochastic process with negatively correlated disjoint increments has
bounded variation over rectangles which do not contain a diagonal.
The following result for such a process, with an additional assumption
(\ref{3SP2}), gives a bound of the $p$-variation of the covariance function
over rectangles containing a diagonal.

\begin{teo}\label{SP2}
Let $0<T<\infty$, let $p\geq1$ and let $X=\{X(t)\dvtx  t\geq0\}$ be
a second order stochastic process
with the covariance function $\Gamma_X$ such that \textup{(\ref
{1SP2})} holds
for any $0\leq u<v\leq s<t\leq T$, and
%
\begin{equation}
\label{3SP2} E\bigl[X(v)-X(u)\bigr] \bigl[X(t)-X(s)\bigr]\geq0,
\end{equation}
holds for any $0\leq s\leq u<v\leq t\leq T$.
Then for any $0\leq a<b\leq T$
%
\begin{equation}
\label{4SP2} V_p\bigl(\Gamma_X;[a,b]^2
\bigr)\leq2V_{2p}\bigl(\psi_X;[a,b]\bigr)^{2},
\end{equation}
where $\psi_X\dvtx [0,T]\to L^2(\Omega,\mathcal{F},\Pr)$ defined by
$\psi_X(t):=X(t,\cdot)$ for $t\in[0,T]$.
\end{teo}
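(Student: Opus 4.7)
The proof hinges on a diagonal-dominance estimate for the incremental covariance of $X$ along an arbitrary partition of $[a,b]$. Fixing such a partition $\mu=(\mu_k)_{k\in[N]}$ and writing $\Delta_k X:=X(\mu_k)-X(\mu_{k-1})$, $c_k:=E[(\Delta_k X)^2]\geq 0$, and $\beta_{k,l}:=E[\Delta_k X\,\Delta_l X]$, hypothesis $(\ref{1SP2})$ gives $\beta_{k,l}\leq 0$ for $k\neq l$, since any two distinct $\mu$-intervals are disjoint. Hypothesis $(\ref{3SP2})$ applied with $s=a$, $u=\mu_{k-1}$, $v=\mu_k$, $t=b$ yields $\sum_{l=1}^N\beta_{k,l}=E[\Delta_k X\,(X(b)-X(a))]\geq 0$, so isolating the diagonal contribution gives the diagonal-dominance bound $\sum_{l\neq k}|\beta_{k,l}|\leq c_k$.

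The bound of Theorem \ref{SP2} is then immediate for diagonal grid partitions $\mu\times\mu$. By the elementary inequality $\sum_l x_l^p\leq(\sum_l x_l)^p$ (valid for $p\geq 1$ and $x_l\geq 0$), the diagonal-dominance estimate yields $\sum_{l\neq k}|\beta_{k,l}|^p\leq c_k^p$ for every $k$. Summing,
$$
s_p(\Gamma_X;\mu\times\mu)=\sum_k c_k^p+\sum_k\sum_{l\neq k}|\beta_{k,l}|^p\leq 2\sum_k c_k^p=2\sum_k\|X(\mu_k)-X(\mu_{k-1})\|_{L_2}^{2p}\leq 2\,V_{2p}(\psi_X;[a,b])^{2p}.
$$
Taking $p$-th roots and supremising over $\mu$ proves the inequality for diagonal partitions with the even stronger constant $2^{1/p}\leq 2$.

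To handle an arbitrary grid partition $\tau=\kappa\times\lambda$, I set $\mu:=\kappa\cup\lambda$. Writing $K_i$ and $L_j$ for the indices of $\mu$-intervals lying inside the $i$-th $\kappa$- and $j$-th $\lambda$-interval respectively, each double increment admits the decomposition $\Delta_{i,j}^\tau\Gamma_X=\sum_{(k,l)\in K_i\times L_j}\beta_{k,l}$. Since $\mu=\kappa\cup\lambda$, no $\mu$-point lies strictly inside the interior of $[s_{i-1},s_i]\cap[t_{j-1},t_j]$, so this intersection is either empty or a single $\mu$-interval; consequently the sum splits as $D_{i,j}+N_{i,j}$ with $D_{i,j}\geq 0$ (the unique diagonal contribution $c_{k(i,j)}$, or $0$) and $N_{i,j}\leq 0$ (all remaining off-diagonal terms). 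The pointwise bound $|D_{i,j}+N_{i,j}|^p\leq D_{i,j}^p+|N_{i,j}|^p$ isolates the two parts. The diagonal part sums bijectively to $\sum_k c_k^p\leq V_{2p}(\psi_X)^{2p}$, and the off-diagonal part $\sum_{i,j}|N_{i,j}|^p$ is controlled by applying Minkowski's inequality row by row in combination with the $\mu$-level diagonal-dominance estimate $\sum_l|\beta_{k,l}|\leq 2c_k$.

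The main obstacle is this last bookkeeping step: for $\kappa\neq\lambda$, bounding $\sum_{i,j}|N_{i,j}|^p$ by a constant multiple of $V_{2p}(\psi_X;[a,b])^{2p}$ while retaining the factor $2$ on the right-hand side of the theorem requires careful use of Minkowski's inequality together with the diagonal-dominance bound at the $\mu$-level, rather than settling for a weaker $p$-dependent constant.
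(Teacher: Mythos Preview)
Your argument for diagonal grid partitions $\mu\times\mu$ is correct and indeed gives the sharper constant $2^{1/p}$. The gap is exactly where you locate it: for a general $\tau=\kappa\times\lambda$ you pass to the common refinement $\mu=\kappa\cup\lambda$, split $\Delta_{i,j}^\tau\Gamma_X=D_{i,j}+N_{i,j}$, and then need $\sum_{i,j}|N_{i,j}|^p\leq (2^p-1)V_{2p}(\psi_X)^{2p}$, but you do not carry this out. The natural route via Minkowski gives only $\sum_j|N_{i,j}|^p\leq\bigl(\sum_{k\in K_i}c_k\bigr)^p$, and $\sum_i\bigl(\sum_{k\in K_i}c_k\bigr)^p$ is \emph{not} controlled by $\sum_k c_k^p$ or by $V_{2p}(\psi_X)^{2p}$ when some $K_i$ is large (negative correlation gives $\sum_{k\in K_i}c_k\geq E[(\Delta_i^\lambda X)^2]$, the wrong direction). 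So the common-refinement strategy, as written, does not close.

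The paper avoids the refinement altogether. Fixing a row index $i$ in the $\lambda$-partition, it bounds the row sum $\sum_j\bigl|E[\Delta_i^\lambda X\,\Delta_j^\kappa X]\bigr|$ directly: insert the two points $s_{i-1},s_i$ into $\kappa$ (this can only increase the sum, by the triangle inequality), after which every $\kappa$-interval lies either inside or outside $[s_{i-1},s_i]$. The inside terms are nonnegative by (\ref{3SP2}) and telescope to $E[(\Delta_i^\lambda X)^2]$; the outside terms are nonpositive by (\ref{1SP2}) and their absolute values sum to $E[(\Delta_i^\lambda X)^2]-E[\Delta_i^\lambda X\,(X(b)-X(a))]\leq E[(\Delta_i^\lambda X)^2]$, again by (\ref{3SP2}). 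Hence $\sum_j|E[\Delta_i^\lambda X\,\Delta_j^\kappa X]|\leq 2E[(\Delta_i^\lambda X)^2]$, and then
\[
s_p(\Gamma_X;\tau)\leq\sum_i\Bigl(\sum_j|E[\Delta_i^\lambda X\,\Delta_j^\kappa X]|\Bigr)^p\leq 2^p\sum_i\|\psi_X(s_i)-\psi_X(s_{i-1})\|_{L_2}^{2p}\leq 2^pV_{2p}(\psi_X;[a,b])^{2p}.
\]
The key difference is that the final $2p$-variation is taken along the \emph{original} coarse partition $\lambda$, not along the refinement $\mu$; this is what makes the constant $2$ come out cleanly and is the idea your bookkeeping step is missing.
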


\begin{rem}\label{SP3}
The theorem is meaningful provided the right side of (\ref{4SP2})
is finite.
In addition to the hypotheses of Theorem~\ref{SP2},
suppose that $X$ and $p\geq1$ are such that for a finite constant $L$
the inequality
\[
E\bigl[X(t)-X(s)\bigr]^2\leq L(t-s)^{\sfrac{1}{p}}
\]
holds for each $0\leq s<t\leq T$.
Then for any $0\leq a<b\leq T$ we have
$V_{2p}(\psi_X;[a,b])\leq\linebreak[4] \sqrt{L}(b-a)^{1/(2p)}$,
and so by Theorem~\ref{SP2}
\[
V_p\bigl(\Gamma_X;[a,b]^2\bigr)
\leq2L(b-a)^{\sfrac{1}{p}}.
\]
\end{rem}

\begin{pf*}{Proof of Theorem~\ref{SP2}}
Let $0\leq a<b\leq T$.
Without loss of generality, we can assume that the right side of
(\ref{4SP2}) is finite.
Let $\lambda\times\kappa=\{(s_i,t_j)\dvtx
i\in[n], j\in[m]\}$ be a partition of $[a,b]^2$
with $n\geq1$ and $m\geq1$.
If $n=1$ then, since $p\geq1$ and (\ref{3SP2}) holds, we have
%
\begin{eqnarray}\label{5SP2}
s_p(\Gamma_X;\lambda\times\kappa)&\leq& \Biggl(\sum
_{j=1}^mE\bigl[X(b)-X(a)\bigr]
\Delta_j^{\kappa}X \Biggr)^p= \bigl(E
\bigl[X(b)-X(a)\bigr]^2 \bigr)^p
\nonumber
\\[-8pt]\\[-8pt]
&=&\bigl\|\psi_X(b)-\psi_X(a)\bigr\|_{L_2}^{2p}
\leq V_{2p}\bigl(\psi _X;[a,b]\bigr)^{2p}.\nonumber
\end{eqnarray}
Let $n\geq2$, let $1\leq i\leq n$, and let $A_i:=\{j\in(m-1]\dvtx
t_j\in(s_{i-1},s_i)\}$.
If $A_i$ is the empty set, then there is a $j_0\in(m]$ such that
$[s_{i-1},s_i]\subset[t_{j_0-1},t_{j_0}]$.
In this case, we have
\begin{eqnarray*}
\bigl|E\Delta_i^{\lambda}X\Delta_{j_0}^{\kappa}X\bigr|
&\leq& E\bigl[\Delta _i^{\lambda}X\bigr]^2
\\
&&{}+ \bigl|E\Delta_i^{\lambda}X \bigl[X(t_{j_0})-X(s_i)
\bigr] \bigr| + \bigl|E\Delta_i^{\lambda}X \bigl[X(s_{i-1})-X(t_{j_0-1})
\bigr] \bigr|.
\end{eqnarray*}
If $A_i$ is not the empty set then let $j_1$ be the minimal element in $A_i$
and let $j_2$ be the maximal element in $A_i$.
In this case, we have
\[
\bigl|E\Delta_i^{\lambda}X\Delta_{j_1}^{\kappa}X\bigr|
\leq \bigl|E\Delta_i^{\lambda}X\bigl[X(t_{j_1})-X(s_{i-1})
\bigr] \bigr| + \bigl|E\Delta_i^{\lambda}X\bigl[X(s_{i-1})-X(t_{j_1-1})
\bigr] \bigr|
\]
and
\[
\bigl|E\Delta_i^{\lambda}X\Delta_{j_2+1}^{\kappa}X\bigr|
\leq \bigl|E\Delta_i^{\lambda}X\bigl[X(t_{j_2+1})-X(s_{i})
\bigr] \bigr| + \bigl|E\Delta_i^{\lambda}X\bigl[X(s_{i})-X(t_{j_2})
\bigr] \bigr|.
\]
Therefore to bound $\sum_{j=1}^m|E\Delta_i^{\lambda}X\Delta
_j^{\kappa}X|$,
we can and do assume that in the partition $\kappa$ we\vspace*{1pt} have
$t_{j_1}=s_{i-1}$ and $t_{j_2}=s_i$ for some $j_1<j_2$ in $(m-1]$.
Using this assumption and negative correlation for disjoint increments
it follows that
\[
\sum_{j=1}^m\bigl|E\Delta_i^{\lambda}X
\Delta_j^{\kappa}X\bigr| =2E\bigl[\Delta_i^{\lambda}X
\bigr]^2-E\Delta_i^{\lambda}X\bigl[X(b)-X(a)\bigr]
\leq2E\bigl[\Delta_i^{\lambda}X\bigr]^2,
\]
where the last inequality holds by (\ref{3SP2}).
Finally, since $p\geq1$, we have
\begin{eqnarray*}
s_p(\Gamma_X;\lambda\times\kappa)&\leq&\sum
_{i=1}^n \Biggl(\sum_{j=1}^m
\bigl|E\Delta_i^{\lambda}X\Delta_j^{\kappa}X\bigr|
\Biggr)^p \leq2^p\sum_{i=1}^n
\bigl(E\bigl[\Delta_i^{\lambda}X\bigr]^2
\bigr)^p
\\
&=&2^p\sum_{i=1}^n\bigl\|
\psi_X(s_i)-\psi_X(s_{i-1})
\bigr\|_{L_2}^{2p} \leq2^pV_{2p}\bigl(
\psi_X;[a,b]\bigr)^{2p}.
\end{eqnarray*}
Recalling the bound (\ref{5SP2}) in the case $n=1$, the conclusion
(\ref{4SP2}) follows.
\end{pf*}

Next, we show that for several classes of stochastic processes including
fractional Brownian motion, subfractional Brownian motion and bifractional
Brownian motion
one has positively or negatively correlated increments.

\subsection*{Stochastic processes with stationary increments}
First, consider real-valued stochastic processes $X$ with mean zero,
finite second
moments $E[X(t)]^2$ and (weakly) stationary increments.
Then the incremental variance function $\sigma_X^2(t,\allowbreak t+r)$ does
not depend on $t$, and so it is a function of $r$.
The following fact is known (see \cite{MBM1968},
page 32); we sketch a
proof for
completeness.

\begin{lem}\label{SP4}
Let $X=\{X(t)\dvtx  t\geq0\}$ be a mean zero second order stochastic process
with stationary increments, and let $\phi\dvtx [0,\infty)\to
[0,\infty)$ be
the function with values
\[
\phi(r):=\sigma_X^2(t,t+r)=E\bigl[X(t+r)-X(t)
\bigr]^2
\]
for each $r\geq0$.
\begin{enumerate}[(ii)]
\item[(i)] If $\phi$ is convex on $[0,T]$, then \textup{(\ref
{1SP1})} holds
for any $0\leq u<v\leq s<t\leq T$.
\item[(ii)] If $\phi$ is concave on $[0,T]$, then \textup{(\ref
{1SP2})} holds
for any $0\leq u<v\leq s<t\leq T$.
\end{enumerate}
\end{lem}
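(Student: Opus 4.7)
The plan is to reduce both parts of the lemma to a second-order finite difference of $\phi$ and then invoke convexity (respectively concavity). The starting point is the purely algebraic identity
\[
2[X(v)-X(u)][X(t)-X(s)] = [X(t)-X(u)]^2 - [X(s)-X(u)]^2 - [X(t)-X(v)]^2 + [X(s)-X(v)]^2,
\]
which one verifies by expanding both sides (all the quadratic terms $X(\cdot)^2$ cancel and the cross terms combine to $2(X(v)-X(u))(X(t)-X(s))$). This identity is the point of the argument, because it avoids assuming anything about $EX(t)X(s)$ individually and instead brings only squared increments into play, which is exactly the information stationary increments give us.

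Next I would take expectations and use the hypothesis of stationary increments, which replaces each $E[X(b)-X(a)]^2$ on the right-hand side by $\phi(b-a)$, so that
\[
2E[X(v)-X(u)][X(t)-X(s)] = \phi(t-u) - \phi(s-u) - \phi(t-v) + \phi(s-v).
\]
Setting $h:=v-u\geq 0$, $k:=s-v\geq 0$ and $\ell:=t-s>0$, one has $0\leq k\leq k+\ell$ and $k+h+\ell=t-u\leq T$, so all four arguments of $\phi$ lie in $[0,T]$. The right-hand side then rewrites as the second-order difference
\[
\bigl[\phi(k+\ell+h)-\phi(k+\ell)\bigr]-\bigl[\phi(k+h)-\phi(k)\bigr].
\]

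Finally I would invoke the standard characterization of convex (respectively concave) functions on an interval: $\phi$ is convex on $[0,T]$ exactly when the map $a\mapsto \phi(a+h)-\phi(a)$ is non-decreasing in $a$ on its domain for every fixed $h\geq 0$, and concave exactly when this map is non-increasing. Applying this with the two values $a=k$ and $a=k+\ell\geq k$ immediately yields (i) under convexity and (ii) under concavity. I do not expect any genuine obstacle; the only delicate point is to write the algebraic identity correctly in the first step and to verify that the parametrization $(h,k,\ell)$ keeps all arguments of $\phi$ inside $[0,T]$, where convexity or concavity is assumed.
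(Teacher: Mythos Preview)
Your argument is correct and follows essentially the same route as the paper: both derive the identity $2E[X(v)-X(u)][X(t)-X(s)]=\phi(t-u)-\phi(s-u)-\phi(t-v)+\phi(s-v)$ and then read off the sign from convexity/concavity of $\phi$. The only difference is cosmetic: the paper invokes the three-slope characterization of convexity (which forces it first to reduce to the case $v-u<t-s$ by ``inserting additional points''), whereas your use of the increasing-first-difference characterization $a\mapsto\phi(a+h)-\phi(a)$ handles all cases at once and is marginally cleaner.
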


\begin{pf}
To prove (i) let $\phi$ be convex on $[0,T]$, and let
$0\leq u<v\leq s<t\leq T$.
Using an expression of $\phi$ in terms of the covariance function
$\Gamma_X$,
it follows that
\[
2E\bigl[X(v)-X(u)\bigr] \bigl[X(t)-X(s)\bigr]=\bigl[\phi(t-u)-\phi(t-v)\bigr]-
\bigl[\phi(s-u)-\phi(s-v)\bigr].
\]
Inserting additional points in the interval $[u,v]$ if necessary,
one can suppose that $v-u<t-s$.
Then letting $x_1:=s-v$, $x_2:=s-u$, $x_3:=t-v$ and $x_4:=t-u$,
we have $0\leq x_1<x_2<x_3<x_4\leq T$ and
\[
\frac{\phi(x_4)-\phi(x_3)}{x_4-x_3}\geq\frac{\phi(x_3)-\phi
(x_2)}{x_3-x_2} \geq\frac{\phi(x_2)-\phi(x_1)}{x_2-x_1},
\]
by convexity of $\phi$.
This proves (\ref{1SP1}), and so (i).
The proof of (ii) is symmetric.
\end{pf}

We apply this fact to a fractional Brownian motion $B_H=\{B_H(t)\dvtx
t\in[0,T]\}$
with the Hurst index $H\in(0,1)$,
which is a Gaussian stochastic process with mean zero and the
covariance function
\[
F_H(s,t):=\Gamma_{B_H}(s,t)=\tfrac{1}{2}
\bigl(t^{2H}+s^{2H}-|t-s|^{2H} \bigr)
\]
for $(s,t)\in[0,T]^2$.

\begin{prop}\label{fBm1}
Let $B_H$ be a fractional Brownian motion with the Hurst index $H\in(0,1)$,
let $\rho_H(u):=u^H$ for each $u\in[0,T]$, and let $p:=\max\{
1,1/(2H)\}$.
Then $B_H\in\mathcal{LSI}(\rho_H(\cdot))$.
Also, the inequality
%
\begin{equation}
\label{1fBm1} V_p\bigl(F_H;[s,t]^2\bigr)
\leq C_1\bigl[\rho_H(t-s)\bigr]^2\equiv
C_1(t-s)^{2H}
\end{equation}
with $C_1=1$ if $2H\geq1$ and $C_1=2$ if $2H<1$,
holds for any $0\leq s<t\leq T$,
and the inequality
%
\begin{equation}
\label{2fBm1} \sum_{j=1}^mV_p
\bigl(F_H;J_i^{\kappa}\times J_j^{\kappa}
\bigr) \leq\sum_{j=1}^m\bigl|E\bigl[
\Delta_i^{\kappa}B_H\Delta_j^{\kappa}B_H
\bigr]\bigr| \leq C_2\bigl(\Delta_i^{\kappa}
\bigr)^{1\wedge(2H)}
\end{equation}
with $C_2=2HT^{2H-1}$ if $2H\geq1$ and $C_2=3$ if $2H<1$,
holds for any partition $\kappa=(t_j)_{j\in[m]}$
of $[0,T]$ and for any $i\in(m]$.
\end{prop}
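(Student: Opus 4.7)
}

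The membership $B_H\in\CLW{\cal LSI}(\rho_H(\cdot))$ is essentially tautological: $B_H$ is mean zero with stationary increments and incremental standard deviation $\sigma_{B_H}(s,t)=|t-s|^H=\rho_H(|t-s|)$, so $\rho_H\in R[0,T]$ (continuity at $0$ and bound (\ref{A0}) are immediate), and both $(A1)$ with $L=1$ and $(A2)$ with the sup equal to $0$ hold as equalities.  I would dispatch this in one sentence and move on.

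For the bound (\ref{1fBm1}) I would split on the sign of $2H-1$.  If $2H\geq 1$ then $p=1$ and the function $\phi(r)=r^{2H}$ is convex on $[0,T]$, so Lemma~\ref{SP4}(i) gives (\ref{1SP1}), and Proposition~\ref{SP1}(i) yields $V_1(F_H;[s,t]^2)=E[B_H(t)-B_H(s)]^2=(t-s)^{2H}$, i.e.\ $C_1=1$.  If $2H<1$ then $\phi$ is concave, so Lemma~\ref{SP4}(ii) provides (\ref{1SP2}).  I would then verify (\ref{3SP2}): using $\Gamma_{B_H}(s,t)=\tfrac12[\phi(s)+\phi(t)-\phi(|t-s|)]$ one checks that for $0\leq s\leq u<v\leq t\leq T$,
\[
2E[(B_H(v)-B_H(u))(B_H(t)-B_H(s))]=\phi(v-s)+\phi(t-u)-\phi(t-v)-\phi(u-s),
\]
which is nonnegative because $\phi$ is nondecreasing and concave with $\phi(0)=0$ (so $\phi(v-s)-\phi(u-s)\geq 0$ while $\phi(t-v)-\phi(t-u)\leq 0$).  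Now with $L=1$ in $E[B_H(t)-B_H(s)]^2\leq L(t-s)^{1/p}$, Theorem~\ref{SP2} through Remark~\ref{SP3} delivers $V_p(F_H;[s,t]^2)\leq 2(t-s)^{2H}$, so $C_1=2$.

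For (\ref{2fBm1}) I would again proceed by cases.  When $2H\geq 1$, positive correlation makes $V_1(F_H;J_i^\kappa\times J_j^\kappa)=E[\Delta_i^\kappa B_H\Delta_j^\kappa B_H]\geq 0$, so the first inequality in (\ref{2fBm1}) is actually an equality, and telescoping gives $\sum_j V_1=E[\Delta_i^\kappa B_H\,(B_H(T)-B_H(0))]=\tfrac12[t_i^{2H}-t_{i-1}^{2H}+(T-t_{i-1})^{2H}-(T-t_i)^{2H}]$.  Since the derivative $2Hr^{2H-1}$ is bounded by $2HT^{2H-1}$ on $[0,T]$ in this regime, the mean value theorem supplies the bound $2HT^{2H-1}\Delta_i^\kappa$, yielding $C_2=2HT^{2H-1}$.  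When $2H<1$, for each $j\neq i$ the rectangle $J_i^\kappa\times J_j^\kappa$ lies off the diagonal and, by (\ref{1SP2}), every sub-rectangle double increment of $F_H$ there has the same sign; using $\sum|a_{k,l}|^p\leq(\sum|a_{k,l}|)^p$ (valid for $p\geq 1$) I get $V_p(F_H;J_i^\kappa\times J_j^\kappa)\leq V_1(F_H;J_i^\kappa\times J_j^\kappa)=|E[\Delta_i^\kappa B_H\Delta_j^\kappa B_H]|$, proving the first inequality of (\ref{2fBm1}) on the off-diagonal terms.  Telescoping under negative correlation gives $\sum_{j\neq i}|E[\Delta_i^\kappa B_H\Delta_j^\kappa B_H]|=(\Delta_i^\kappa)^{2H}-E[\Delta_i^\kappa B_H\,B_H(T)]$, and the latter expectation is nonnegative by monotonicity of $r^{2H}$, so $\sum_j|E[\Delta_i^\kappa B_H\Delta_j^\kappa B_H]|\leq 2(\Delta_i^\kappa)^{2H}$.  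For the diagonal block $j=i$, Theorem~\ref{SP2} (applied on $J_i^\kappa\times J_i^\kappa$) adds at most $2(\Delta_i^\kappa)^{2H}$, so combining the two contributions gives the final bound with $C_2=3$.

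The main obstacle is the $2H<1$ case: here one must verify the auxiliary hypothesis (\ref{3SP2}) before Theorem~\ref{SP2} is usable, and one must combine the off-diagonal bound (obtained via the ``same-sign'' passage $V_p\leq V_1$) with the separate diagonal bound from Theorem~\ref{SP2}.  Once the positive- and negative-correlation structure of $B_H$ has been harvested from Lemma~\ref{SP4}, however, everything reduces to the covariance identity $2\Gamma_{B_H}(s,t)=\phi(s)+\phi(t)-\phi(|t-s|)$ together with elementary monotonicity/convexity properties of $r\mapsto r^{2H}$.
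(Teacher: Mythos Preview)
Your proposal is correct and follows essentially the same route as the paper: split on the sign of $2H-1$, invoke Lemma~\ref{SP4} and Proposition~\ref{SP1} for the positive/negative correlation dichotomy, verify (\ref{3SP2}) via the explicit covariance identity so that Theorem~\ref{SP2} with Remark~\ref{SP3} handles the $2H<1$ diagonal block, and finish (\ref{2fBm1}) by telescoping plus the mean value theorem when $2H\geq 1$ and by the off-diagonal $V_p\leq V_1$ bound plus the diagonal estimate when $2H<1$. The only cosmetic difference is that you argue (\ref{3SP2}) from monotonicity of $\phi$ rather than writing out the explicit formula (\ref{3fBm1}); the content is the same, and your constants $C_1,C_2$ agree with the paper's.
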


\begin{pf}
The incremental variance function $\sigma_{B_H}^2(s,t)=|t-s|^{2H}$ for
$(s,t)\in[0,T]$.
Clearly, $B_H\in\mathcal{LSI}(\rho_H(\cdot))$.
By Lemma~\ref{SP4}, disjoint increments of $B_H$ are positively
correlated if
$2H\geq1$ and negatively correlated if $2H< 1$.
If $2H\geq1$, then by part (i) of Proposition~\ref{SP1},
\[
V_1\bigl(F_H;[a,b]^2\bigr)=E
\bigl[B_H(b)-B_H(a)\bigr]^2=(b-a)^{2H}
\]
for any $0\leq a<b\leq T$, proving (\ref{1fBm1}) with $C_1=1$ in this case.
If $2H< 1$ then (\ref{1fBm1}) holds with $C_1=2$ by Remark~\ref{SP3} and
Theorem~\ref{SP2} since its hypothesis (\ref{3SP2})
holds due to the relation
%
\begin{eqnarray}\label{3fBm1}
&&2E\bigl[B_H(v)-B_H(u)\bigr] \bigl[B_H(t)-B_H(s)
\bigr]
\nonumber
\\[-8pt]\\[-8pt]
&&\quad =(v-s)^{2H}-(u-s)^{2H}+(t-u)^{2H}-(t-v)^{2H}>0\nonumber
\end{eqnarray}
for any $0\leq s\leq u<v\leq t\leq T$.

To prove (\ref{2fBm1}) let $\kappa=(t_j)_{j\in[m]}$ be a partition
of $[0,T]$, and let
$i\in(m]$.
Due to (\ref{1fBm1}) one can suppose that $m>1$.
First let $H\in[1/2,1)$.
Then $p=1$.
By part (i) of Lemma~\ref{SP4} and by part (i) of Proposition~\ref{SP1}
we have
\begin{eqnarray*}
\sum_{j=1}^mV_1
\bigl(F_H;[t_{i-1},t_i]\times[t_{j-1},t_j]
\bigr) &=&\sum_{j=1}^mE\bigl[
\Delta_i^{\kappa}B_H\Delta_j^{\kappa}B_H
\bigr]
\\
&=&E\bigl[B_H(t_i)-B_H(t_{i-1})
\bigr] \bigl[B_H(T)-B_H(0)\bigr]
\\
&=&\frac{1}{2} \bigl[t_i^{2H}-t_{i-1}^{2H}+(T-t_{i-1})^{2H}-(T-t_i)^{2H}
\bigr]
\\
&\leq& 2HT^{2H-1}(t_i-t_{i-1}),
\end{eqnarray*}
where the last inequality holds by the mean value theorem.
Now, let $H\in(0,1/2)$.
Then $p=1/(2H)>1$.
Since $V_p(\cdot)\leq V_1(\cdot)$,
by part (ii) of Lemma~\ref{SP4} and by part (ii) of
Proposition~\ref{SP1} we have
\begin{eqnarray*}
\sum_{j\in(m]\setminus\{i\}}V_p\bigl(F_H;J_i^{\kappa}
\times J_j^{\kappa}\bigr) &\leq&\sum
_{j\in(m]\setminus\{i\}}V_1\bigl(F_H;J_i^{\kappa}
\times J_j^{\kappa}\bigr)
\\
&=&\sum_{j\in(m]\setminus\{i\}}\bigl|E\bigl(\Delta_i^{\kappa}B_H
\Delta _j^{\kappa}B_H\bigr)\bigr|\\
& =&E\bigl(
\Delta_i^{\kappa}B_H\bigr)^2-E
\Delta_i^{\kappa}B_H\bigl[B_H(T)-B_H(0)
\bigr]
\\
&\leq& E\bigl(\Delta_i^{\kappa}B_H
\bigr)^2=(t_i-t_{i-1})^{2H},
\end{eqnarray*}
where the last inequality holds by (\ref{3fBm1}).
This together with (\ref{1fBm1}) gives (\ref{2fBm1}),
completing the proof.
\end{pf}

The inequality (\ref{1fBm1}) in the case $H\in(0,1/2)$ and with a
different constant
is the same as the one stated by Proposition~13 in \cite{FandV2010}.
The proofs seem to be also different.

\subsection*{Sub-fractional Brownian motion}
Let $H\in(0,1)$ and $0<T<\infty$.
The function $R_H\dvtx [0,T]^2\to\RR$ with values
%
\begin{equation}
\label{sfBm1} R_H(s,t):=s^{2H}+t^{2H}-
\tfrac{1}{2} \bigl[(s+t)^{2H}+|s-t|^{2H} \bigr],
\end{equation}
$(s,t)\in[0,T]^2$, is positive definite as shown in  \cite{BGT04}.
A \emph{sub-fractional Brownian motion} with index $H$ is a mean zero
Gaussian stochastic process $G_H=\{G_H(t)\dvtx  t\in[0,T]\}$ with the
covariance function $R_H$ and with the incremental variance function
\[
\sigma_{G_H}^2(s,t)=|s-t|^{2H}+(s+t)^{2H}-2^{2H-1}
\bigl[t^{2H}+s^{2H}\bigr]
\]
for $s,t\in[0,T]$.
In the case $H=1/2$, $G_{1/2}$ is a Brownian motion.
A subfractional Brownian motion $G_H$ with index $H$ is $H$-self-similar
but does not have stationary increments if $H\neq1/2$.

\begin{prop}\label{sfBm2}
Let $G_H=\{G_H(t)\dvtx  t\in[0,T]\}$
be a sub-fractional Brownian motion with $H\in(0,1)$.
The following properties hold:
\begin{enumerate}[(iii)]
\item[(i)] for any $0\leq s\leq t\leq T$
\begin{eqnarray*}
(t-s)^{2H}&\leq&\sigma_{G_H}^2(s,t)\leq
\bigl(2-2^{2H-1}\bigr) (t-s)^{2H}, \qquad \mbox{if } 0<H<1/2,
\\
\bigl(2-2^{2H-1}\bigr) (t-s)^{2H}&\leq&\sigma_{G_H}^2(s,t)
\leq (t-s)^{2H}, \qquad \mbox{if } 1/2<H<1;
\end{eqnarray*}
\item[(ii)] for any $0\leq u<v\leq s<t\leq T$
\[
E\bigl[G_H(v)-G_H(u)\bigr] \bigl[G_H(t)-G_H(s)
\bigr] \lleft\{ %
\begin{array} {l@{\qquad}l} < 0, &\mbox{if $0<H<1/2$,}
\\
> 0, &\mbox{if $1/2<H<1$};
\end{array} %
\rright.
\]
\item[(iii)] for any $0\leq s\leq u<v\leq t\leq T$
\[
C(u,v,s,t):=E\bigl[G_H(v)-G_H(u)\bigr]
\bigl[G_H(t)-G_H(s)\bigr]> 0.
\]
\end{enumerate}
\end{prop}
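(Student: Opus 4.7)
The plan is to reduce everything to explicit calculations with the formula for $R_H$, exploiting the signs of the mixed second partial derivatives of $(a+b)^{2H}$ and $|a-b|^{2H}$ together with the convexity/concavity of $x\mapsto x^{2H}$.

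For $(i)$, I would expand $\sigma_{G_H}^2(s,t)=R_H(s,s)-2R_H(s,t)+R_H(t,t)$, using $R_H(a,a)=(2-2^{2H-1})a^{2H}$, to obtain $\sigma_{G_H}^2(s,t)=(t-s)^{2H}+\psi(s,t)$ with $\psi(s,t):=(s+t)^{2H}-2^{2H-1}(s^{2H}+t^{2H})$. Midpoint convexity of $x\mapsto x^{2H}$ gives $\psi\le 0$ for $H>1/2$ and $\psi\ge 0$ for $H<1/2$, supplying the inequality bounded by $(t-s)^{2H}$ in each case. For the reverse bound I would check that $s\mapsto\psi(s,s+\Delta)$ (with $\Delta=t-s$ fixed) is monotone, by differentiating and using monotonicity of $x^{2H-1}$; its extreme value at $s=0$ is $\psi(0,\Delta)=(1-2^{2H-1})\Delta^{2H}$, which combines with $(t-s)^{2H}$ to yield the factor $2-2^{2H-1}$.

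Both $(ii)$ and $(iii)$ rest on the identity $E[G_H(v)-G_H(u)][G_H(t)-G_H(s)]=-\tfrac{1}{2}(P+Q)$, where $P$ is the rectangular increment of $(a+b)^{2H}$ on $[u,v]\times[s,t]$ and $Q$ that of $|a-b|^{2H}$ (the $a^{2H}+b^{2H}$ part of $R_H$ has zero mixed difference). From $\partial_a\partial_b(a+b)^{2H}=2H(2H-1)(a+b)^{2H-2}$ I get $P=2H(2H-1)\iint_{[u,v]\times[s,t]}(x+y)^{2H-2}\,dx\,dy$, whose sign equals $\mathrm{sgn}(2H-1)$. In the setting of $(ii)$, $v\le s$ forces $y-x>0$ on the rectangle, and a parallel computation yields $Q=-2H(2H-1)\iint(y-x)^{2H-2}\,dx\,dy$; since $x+y>y-x>0$ and $z\mapsto z^{2H-2}$ is strictly decreasing on $(0,\infty)$, the integrand of $P+Q$ is strictly negative, which inverts the sign of $2H-1$ and, after the $-\tfrac{1}{2}$ prefactor, yields the signs claimed in $(ii)$.

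For $(iii)$, the sign of $b-a$ on $[u,v]\times[s,t]$ is indefinite, so I would evaluate $Q$ as a single integral by elementary integration, obtaining $Q=-2H\int_u^v[(t-y)^{2H-1}+(y-s)^{2H-1}]\,dy<0$ for every $H\in(0,1)$. When $H<1/2$ one also has $P<0$, so $C(u,v,s,t)=-\tfrac{1}{2}(P+Q)>0$ is immediate. When $H>1/2$, the cleanest route is to bypass the $P$-vs-$Q$ comparison altogether by decomposing $G_H(t)-G_H(s)$ over the adjacent subintervals $[s,u]$, $[u,v]$, $[v,t]$, which yields $C(u,v,s,t)=E[G_H(v)-G_H(u)]^2+E[G_H(u)-G_H(s)][G_H(v)-G_H(u)]+E[G_H(v)-G_H(u)][G_H(t)-G_H(v)]$; the first term is strictly positive, and the two cross-terms are non-negative by $(ii)$ for $H>1/2$ (degenerate cases $s=u$ or $v=t$ simply contributing zero). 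The main difficulty is precisely this $H>1/2$ subcase: a purely calculational proof would require the extra double-integral representation $Q=-2H(2H-1)\iint|y-x|^{2H-2}\,dx\,dy$, valid only for $H>1/2$ via the identity $(t-y)^{2H-1}=(2H-1)\int_y^t(z-y)^{2H-2}\,dz$, whereas the decomposition argument just sketched reduces it directly to $(ii)$.
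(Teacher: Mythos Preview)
Your proposal is correct. The paper does not actually prove $(i)$ and $(ii)$ but cites \cite{BGT04}; your direct arguments via midpoint convexity of $x\mapsto x^{2H}$ and the double-integral representation of the mixed second difference are a clean self-contained substitute. For $(iii)$ with $H>1/2$ you and the paper use the identical decomposition of $[s,t]$ into $[s,u]\cup[u,v]\cup[v,t]$ and reduce to $(ii)$. For $(iii)$ with $H<1/2$ the two arguments diverge: the paper regroups the eight terms of $2C(u,v,s,t)$ into the single-variable functions $f_t(x)=(t+x)^{2H}+(t-x)^{2H}$ and $g_s(x)=(x+s)^{2H}+(x-s)^{2H}$, checks $f_t'<0$ and $g_s'>0$, and concludes $-f_t(v)+f_t(u)>0$ and $g_s(v)-g_s(u)>0$; you instead keep the $P,Q$ split and show each piece is negative separately. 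Both are equally elementary; your grouping has the advantage of recycling the $(ii)$ machinery verbatim, while the paper's grouping sidesteps any improper-integral bookkeeping. On that last point, note that in your representation $Q=-2H(2H-1)\iint(y-x)^{2H-2}$ for $(ii)$ at the boundary $v=s$, and $P=2H(2H-1)\iint(x+y)^{2H-2}$ for $(iii)$ at $s=u=0$, the exponent $2H-2<-1$ makes the integrand non-integrable along one coordinate, but the two-dimensional integral is still finite (e.g.\ $\int_0^\epsilon\int_0^\epsilon(a+b)^{2H-2}\,da\,db=(2^{2H}-2)\epsilon^{2H}/[2H(2H-1)]$); this deserves one line of justification in a full write-up.
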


\begin{pf}
Statements (i) and (ii) are proved in  \cite
{BGT04}, Theorems (3), (5).
To prove (iii), let $0\leq s\leq u<v\leq t\leq T$.
Since the pairs of intervals $[s,u]$, $[u,v]$ and $[v,t]$, $[u,v]$ do
not intersect
(except for the endpoints),
by (ii) in the case $1/2<H<1$, we have
\begin{eqnarray*}
C(u,v,s,t)&=&E\bigl[G_H(v)-G_H(u)\bigr]^2
\\
&&{}+E\bigl[G_H(v)-G_H(u)\bigr] \bigl[G_H(u)-G_H(s)
\bigr]\\
&&{}+E\bigl[G_H(v)-G_H(u)\bigr] \bigl[G_H(t)-G_H(v)
\bigr]\\
&>& 0.
\end{eqnarray*}
Thus we can suppose that $0<H<1/2$.
Using the values of the covariance function (\ref{sfBm1}) it follows that
%
\begin{eqnarray}\label{1sfBm2}
C(u,v,s,t)&=&\tfrac{1}{2} \bigl\{-(t+v)^{2H}-(t-v)^{2H}+(t+u)^{2H}+
(t-u)^{2H}\nonumber
\\[-8pt]\\[-8pt]
&&\hphantom{\tfrac{1}{2} \bigl\{}{} +(v+s)^{2H}+(v-s)^{2H}-(u+s)^{2H}-(u-s)^{2H}
\bigr\}.
\nonumber
\end{eqnarray}
Let $f_t(x):=(t+x)^{2H}+(t-x)^{2H}$ for each $x\in[0,t]$.
Since $0<H<1/2$, then
$f_t'(x)<0$ for $x\in(0,t)$, and so for $0\leq u<v\leq t$ we have
$-f_t(v)+f_t(u)> 0$
by the mean value theorem.
Let $g_s(x):=(x+s)^{2H}+(x-s)^{2H}$ for each $x\geq s$.
Since $g_s'(x)>0$ for $x>s$ and $v>u\geq s$, we have $g_s(v)-g_s(u)>0$
by the mean value theorem again.
Therefore
\[
C(u,v,s,t)=\tfrac{1}{2} \bigl\{-f_t(v)+f_t(u)+g_s(v)-g_s(u)
\bigr\}>0,
\]
as claimed.
\end{pf}

The following proposition shows that the hypotheses (\ref{G7}) and
(\ref{3asconvergence}) of the main result (Theorem~\ref{asconvergence})
hold true for a sub-fractional Brownian motion.

\begin{prop}\label{sfBm3}
Let $G_H=\{G_H(t)\dvtx  t\in[0,T]\}$ be a sub-fractional Brownian
motion with $H\in(0,1)$,
let $\rho_H(u):=u^H$ for each $u\in[0,T]$, and let $p:=\max\{
1,1/(2H)\}$.
Then $G_H\in\mathcal{LSI}(\rho_H(\cdot))$.
Also, there is a finite constant $C_1$ such that the inequality
%
\begin{equation}
\label{1sfBm3} V_p\bigl(R_H;[s,t]^2\bigr)
\leq C_1\bigl[\rho_H(t-s)\bigr]^2\equiv
C_1(t-s)^{2H}
\end{equation}
holds for any $0\leq s<t\leq T$, and there is a finite constant $C_2$
such that the inequality
%
\begin{equation}
\label{2sfBm3} \sum_{j=1}^mV_p
\bigl(R_H;J_i^{\kappa}\times J_j^{\kappa}
\bigr) \leq\sum_{j=1}^m \bigl|E\bigl[
\Delta_i^{\kappa}G_H\Delta_j^{\kappa
}G_H
\bigr] \bigr| \leq C_2\bigl(\Delta_i^{\kappa}
\bigr)^{1\wedge(2H)}
\end{equation}
holds for any partition $(t_i)_{i\in[m]}$ of $[0,T]$ and for any $i\in(m]$.
\end{prop}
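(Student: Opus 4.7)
The proposition has three assertions: $G_H\in\mathcal{LSI}(\rho_H)$, the $p$-variation bound \eqref{1sfBm3}, and the summability bound \eqref{2sfBm3}. My plan is to split throughout into the cases $H\geq 1/2$ and $H<1/2$, using Proposition \ref{sfBm2} to read off the sign of the disjoint-increment correlations.

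For the membership in $\mathcal{LSI}(\rho_H)$, hypothesis $(A1)$ is immediate from Proposition \ref{sfBm2}$(i)$. For $(A2)$, I would start from the explicit formula $\sigma_{G_H}^2(s,s+h)=h^{2H}+(2s+h)^{2H}-2^{2H-1}[(s+h)^{2H}+s^{2H}]$ and show that, after dividing by $h^{2H}$, the remainder tends to $0$ uniformly on $s\in[\epsilon,T)$. Factoring the remainder as $2^{2H-1}\{2(s+h/2)^{2H}-(s+h)^{2H}-s^{2H}\}$ and applying a second-order Taylor expansion of $x\mapsto x^{2H}$ about $s$, the constant and linear-in-$h$ terms cancel, leaving an $O(h^2)$ error controlled by the supremum of the second derivative of $x^{2H}$ on $[\epsilon,T+1]$. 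Dividing by $h^{2H}$ yields $O(h^{2-2H})\to 0$, uniform in $s$. From $\sigma_{G_H}^2(s,s+h)/h^{2H}\to 1$ one then extracts $\sigma_{G_H}(s,s+h)/h^H\to 1$.

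For \eqref{1sfBm3}, in the case $H\geq 1/2$ Proposition \ref{sfBm2}$(ii)$ supplies the positive-correlation hypothesis of Proposition \ref{SP1}$(i)$, and Proposition \ref{sfBm2}$(i)$ bounds $\sigma_{G_H}^2(s,t)$ by $(t-s)^{2H}$, giving $V_1(R_H;[s,t]^2)\leq(t-s)^{2H}$. In the case $H<1/2$, I would verify the two hypotheses of Theorem \ref{SP2}: the negative correlation of disjoint increments from Proposition \ref{sfBm2}$(ii)$ and the positive correlation of nested increments \eqref{3SP2} from Proposition \ref{sfBm2}$(iii)$. Combined with the $L^2$-bound from Proposition \ref{sfBm2}$(i)$ with $L=2-2^{2H-1}$, Remark \ref{SP3} yields $V_p(R_H;[s,t]^2)\leq 2L(t-s)^{2H}$.

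For \eqref{2sfBm3}, fix a partition $(t_j)_{j\in[m]}$ of $[0,T]$ and an index $i$. In the case $H\geq 1/2$, Proposition \ref{SP1}$(i)$ applies to every subrectangle (including the diagonal), so $V_p(R_H;J_i^\kappa\times J_j^\kappa)=|E[\Delta_i^\kappa G_H\Delta_j^\kappa G_H]|$ for all $j$, and telescoping gives $\sum_{j} E[\Delta_i^\kappa G_H\Delta_j^\kappa G_H]=E[\Delta_i^\kappa G_H\cdot G_H(T)]=R_H(t_i,T)-R_H(t_{i-1},T)$, which is Lipschitz in $t$ on $[0,T]$ by direct differentiation of \eqref{sfBm1}, producing the bound with exponent $1=1\wedge(2H)$. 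In the case $H<1/2$, Proposition \ref{SP1}$(ii)$ yields $V_1(R_H;J_i^\kappa\times J_j^\kappa)=|E[\Delta_i^\kappa G_H\Delta_j^\kappa G_H]|$ for $j\neq i$, so by $V_p\leq V_1$ the off-diagonal sum telescopes to $E[\Delta_i^\kappa G_H]^2-E[\Delta_i^\kappa G_H\cdot G_H(T)]$, where the subtracted term is nonnegative by Proposition \ref{sfBm2}$(iii)$ with $s=0$, $u=t_{i-1}$, $v=t_i$, $t=T$; the diagonal term is controlled by \eqref{1sfBm3} from the previous stage, and Proposition \ref{sfBm2}$(i)$ bounds $E[\Delta_i^\kappa G_H]^2$ by $(2-2^{2H-1})(\Delta_i^\kappa)^{2H}$. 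Combining gives $\sum_j V_p(R_H;J_i^\kappa\times J_j^\kappa)\leq C_2(\Delta_i^\kappa)^{2H}$.

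The main obstacle is the uniform convergence in $(A2)$: the naive term-by-term size of $(2s+h)^{2H}-2^{2H-1}[(s+h)^{2H}+s^{2H}]$ is $O(h^{2H})$, of the same order as the dominant $h^{2H}$, and it is only through the second-order cancellation in the Taylor expansion that one extracts $O(h^2)$. Controlling this uniformly is where the restriction $s\geq\epsilon$ is used, since the second derivative of $x^{2H}$ diverges at the origin.
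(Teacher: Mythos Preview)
Your proposal is correct and follows essentially the same route as the paper's proof: $(A1)$ from Proposition \ref{sfBm2}$(i)$; $(A2)$ via a second-order Taylor argument on the remainder $(2s+h)^{2H}-2^{2H-1}[(s+h)^{2H}+s^{2H}]$ (the paper sets $f_s(h)$ equal to this, checks $f_s(0)=f_s'(0)=0$, and invokes Lagrange's remainder, which is exactly your cancellation observation); \eqref{1sfBm3} via Proposition \ref{SP1}$(i)$ for $H\geq 1/2$ and Theorem \ref{SP2} with Remark \ref{SP3} for $H<1/2$; and \eqref{2sfBm3} by telescoping the correlation sum and handling the diagonal separately. The only cosmetic differences are your factoring $(2s+h)^{2H}=2^{2H}(s+h/2)^{2H}$ to exhibit a second difference, and, for $H\geq 1/2$, bounding $R_H(t_i,T)-R_H(t_{i-1},T)$ by a direct Lipschitz estimate where the paper instead drops the nonpositive term $-\tfrac{1}{2}[f_T(t_i)-f_T(t_{i-1})]$ (with $f_T(t)=(T+t)^{2H}+(T-t)^{2H}$ increasing) before applying the mean value theorem to $t^{2H}$.
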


\begin{pf}
Condition (A1) of Definition~\ref{LV} holds by part (i) of
Proposition~\ref{sfBm2}.
To prove condition (A2) suppose that $H\in(0,1/2)\cup(1/2,1)$ and
let $\epsilon\in(0,T)$.
For each $s\in[\epsilon,T)$ and \mbox{$t\in[-\epsilon,T-s]$}, let
\[
f_s(t):=(2s+t)^{2H}-2^{2H-1}
\bigl[s^{2H}+(s+t)^{2H} \bigr].
\]
Then $f_s(0)=f_s'(0)=0$ and
\[
b(s,s+h):= \biggl(\frac{\sigma_{G_H}(s,s+h)}{\rho_H(h)} \biggr)^2-1
=h^{-2H}f_s(h).
\]
Let $s\in[\epsilon,T)$ and $h\in(0,T-s]$.
By Taylor's theorem with the Lagrange remainder applied to the function
$f_s$, there exists $u=u(s,h)\in(0,h)$ such that
\[
b(s,s+h)=2^{-1}h^{2(1-H)}f_s''(u),
\]
where
\[
f_s''(u)=2H(2H-1)
\bigl[(2s+u)^{2H-2}-2^{2H-1}(s+u)^{2H-2} \bigr].
\]
Then there is a finite constant $C=C(\epsilon,H)$ such that the inequality
\[
\bigl|b(s,s+h)\bigr|\leq Ch^{2(1-H)}
\]
holds for each $s\in[\epsilon,T)$ and $h\in(0,T-s]$.
Since $H<1$ the preceding bound yields that condition (A2) holds,
and so $G_H\in\mathcal{LSI}(\rho_H(\cdot))$.

To prove (\ref{1sfBm3}), first let $H\in(1/2,1)$.
Then $p=1$ and hypothesis (\ref{1SP1}) holds for $X=G_H$ by part
(ii) of Proposition~\ref{sfBm2}.
Therefore in this case
by part (i) of Proposition~\ref{SP1} and by part (i) of Proposition~\ref{sfBm2}, for any $0\leq s<t\leq T$ we have
\[
V_1\bigl(R_H;[s,t]^2\bigr)=E
\bigl[G_H(t)-G_H(s)\bigr]^2
\leq(t-s)^{2H}.
\]
Therefore, (\ref{1sfBm3}) holds with $C_1=1$ in the case $H\in(1/2,1)$.
Now let $H\in(0,1/2)$.
Then $p=1/(2H)>1$ and the hypotheses of Theorem~\ref{SP2} hold by
Proposition~\ref{sfBm2}.
By part\vspace*{1pt} (i) of Proposition~\ref{sfBm2} and Remark~\ref{SP3}
with $L=2-2^{2H-1}$, (\ref{1sfBm3}) holds with $C_1=4-2^{2H}$ in the case
$H\in(0,1/2)$.

To prove (\ref{2sfBm3}), let $\kappa=(t_j)_{j\in[m]}$ be a partition
of $[0,T]$
and $i\in(m]$.
Due to (\ref{1sfBm3}), one can suppose that $m>1$.
First, let $H\in(1/2,1)$.
Then $p=1$.
As for fractional Brownian motion (Proposition~\ref{fBm1}), in the
present case by
part (ii) of Proposition~\ref{sfBm2} and by part (i) of
Proposition~\ref{SP1},
we have
\[
\sum_{j=1}^mV_1
\bigl(R_H;J_i^{\kappa}\times J_j^{\kappa}
\bigr) =\sum_{j=1}^mE\bigl[
\Delta_i^{\kappa}G_H\Delta_j^{\kappa}G_H
\bigr] =-\frac{1}{2} \bigl[f_T(t_i)-f_T(t_{i-1})
\bigr]+t_i^{2H}-t_{i-1}^{2H},
\]
where $f_T(t)=(T+t)^{2H}+(T-t)^{2H}$ for $t\in[0,T]$ and the last equality
is the special case of (\ref{1sfBm2}).
Since $1/2<H<1$ the function $f_T$ is increasing, and so
\[
\sum_{j=1}^mE\bigl[\Delta_i^{\kappa}G_H
\Delta_j^{\kappa}G_H\bigr] \leq
t_i^{2H}-t_{i-1}^{2H}
\leq2HT^{2H-1} (t_i-t_{i-1})
\]
by the mean value theorem.
Now let $H\in(0,1/2)$.
Then $p=1/(2H)>1$.
Again as for fractional Brownian motion (Proposition~\ref{fBm1}), in
the present case by
part (ii) of Proposition~\ref{SP1} and by parts (ii), (iii) of
Proposition~\ref{sfBm2}, we have
\[
\sum_{j\in(m]\setminus\{i\}}V_p\bigl(R_H;J_i^{\kappa}
\times J_j^{\kappa}\bigr) \leq\sum
_{j\in(m]\setminus\{i\}} \bigl|E\bigl[\Delta_i^{\kappa
}G_H
\Delta_j^{\kappa}G_H\bigr] \bigr| \leq E\bigl(
\Delta_i^{\kappa}G_H\bigr)^2.
\]
Then by part (i) of Proposition~\ref{sfBm2}, the inequality
\[
\sum_{j=1}^mV_p
\bigl(R_H;J_i^{\kappa}\times J_j^{\kappa}
\bigr) \leq\sum_{j=1}^m \bigl|E\bigl[
\Delta_i^{\kappa}G_H\Delta_j^{\kappa
}G_H
\bigr] \bigr| \leq2E\bigl(\Delta_i^{\kappa}G_H
\bigr)^2 \leq C_2\bigl(\Delta_i^{\kappa}
\bigr)^{\sfrac{1}{p}},
\]
holds with $c_2=2HT^{2H-1}$ if $2H>1$ and $C_2=4-2^{2H}$ if $2H<1$,
completing the proof.
\end{pf}

\subsection*{Bifractional Brownian motion}
Let $0<T<\infty$, $0<H< 1$ and $0<K\leq1$.
The function $C_{H,K}\dvtx [0,T]^2\to\RR$ with values
%
\begin{equation}
\label{bfBm} C_{H,K}(s,t):=2^{-K} \bigl\{
\bigl(t^{2H}+s^{2H}\bigr)^K-|t-s|^{2HK}
\bigr\},
\end{equation}
$(s,t)\in[0,T]^2$, is positive definite as shown in \cite{HandV03}.
A \emph{bifractional Brownian motion} with parameters $(H,K)$
is a mean zero Gaussian stochastic process
$B_{H,K}=\{B_{H,K}(t)\dvtx  t\in[0,T]\}$ with the covariance function
$C_{H,K}$.
When $K=1$, $B_{H,1}$ is the fractional Brownian motion $B_H$ with the Hurst
index $H\in(0,1)$.
The Gaussian process $B_{H,K}$ is a self-similar stochastic process of order
$HK\in(0,1)$, the increments are not stationary and its incremental
variance function is
\[
\sigma_{B_{H,K}}^2(s,t)=2^{1-K} \bigl[|t-s|^{2HK}-
\bigl(t^{2H}+s^{2H}\bigr)^K \bigr]
+t^{2HK}+s^{2HK}
\]
for each $s,t\geq0$.
By Proposition~3.1 of  \cite{HandV03}, for every
$s,t\geq0$,
%
\begin{equation}
\label{bfBm1} 2^{-K}|t-s|^{2HK}\leq\sigma_{B_{H,K}}^2(s,t)
\leq2^{1-K}|t-s|^{2HK}.
\end{equation}
This suggests that the incremental variance function $\sigma
_{B_{H,K}}^2$ is dominated
by a single variable function $u\mapsto \mathit{const}|u|^{2HK}$, $u\in\RR$.
A more precise property is proved next.

\begin{prop}\label{bfBm2}
Let $0<T<\infty$, $0<H< 1$, $0<K<1$ and\vspace*{1pt}
$B_{H,K}=\{B_{H,K}(t)\dvtx  t\in[0,T]\}$ be a bifractional Brownian
motion with parameters $(H,K)$.
Let $\rho_{H,K}(u):=2^{(1-K)/2}u^{HK}$ for each $u\in[0,T]$, and let
$p:=\max\{1,1/(2HK)\}$.
Then $B_{H,K}\in\break\mathcal{LSI}(\rho_{H,K}(\cdot))$.
Also, there is a finite constant $C_1$ such that the inequality
%
\begin{equation}
\label{1bfBm2} V_p\bigl(C_{H,K};[a,b]^2\bigr)
\leq C_1(b-a)^{2HK}
\end{equation}
holds for any $0\leq a<b\leq T$, and there is a finite constant $C_2$
such that the inequality
%
\begin{equation}
\label{2bfBm2} \sum_{j=1}^mV_p
\bigl(C_{H,K};J_i^{\kappa}\times J_j^{\kappa}
\bigr) \leq C_2\bigl(\Delta_i^{\kappa}
\bigr)^{1\wedge(2HK)}
\end{equation}
holds for any partition $(t_j)_{j\in[m]}$ of $[0,T]$ and for any $i\in(m]$.
\end{prop}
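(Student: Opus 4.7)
The overall strategy mirrors Proposition \ref{sfBm3}: I first verify $B_{H,K}\in {\cal LSI}(\rho_{H,K}(\cdot))$, then decompose the covariance function $C_{H,K}$ into a ``smooth'' piece controlled by hand and a ``rough'' piece inherited from a fractional Brownian motion $B_{HK}$ via Proposition \ref{fBm1}, bounding the $p$-variation of each piece separately.

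Condition $(A1)$ is immediate from (\ref{bfBm1}). For $(A2)$, I set $g_s(h):=\sigma_{B_{H,K}}^2(s,s+h)-2^{1-K}h^{2HK}=(s+h)^{2HK}+s^{2HK}-2^{1-K}((s+h)^{2H}+s^{2H})^K$. Direct computation gives $g_s(0)=g_s'(0)=0$, while every factor of the form $(s+h)^{\alpha}$ or $(s^{2H}+(s+h)^{2H})^{\beta}$ appearing in $g_s''(h)$ remains bounded on $\{(s,h)\colon s\in[\epsilon,T),\,h\in(0,T-s]\}$ for any $\epsilon>0$, so $|g_s''|\le M(\epsilon)$ uniformly there. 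Taylor's theorem with Lagrange remainder then gives $|g_s(h)|\leq Mh^2$, hence $|\sigma_{B_{H,K}}^2(s,s+h)/\rho_{H,K}^2(h)-1|=O(h^{2-2HK})\to 0$ uniformly in $s\ge\epsilon$; here $HK<1$ is what ensures $2-2HK>0$.

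For (\ref{1bfBm2}) and (\ref{2bfBm2}), I write $C_{H,K}(s,t)=2^{-K}\Psi(s,t)+\Phi(s,t)$ with $\Psi(s,t):=(s^{2H}+t^{2H})^K$ and $\Phi(s,t):=-2^{-K}|t-s|^{2HK}$. Because single-variable functions contribute zero double increments, each double increment of $\Phi$ equals $2^{1-K}$ times the corresponding double increment of $F_{HK}$; hence $V_p(\Phi;[a,b]^2)$ and $\sum_jV_p(\Phi;J_i^\kappa\times J_j^\kappa)$ are controlled by Proposition \ref{fBm1} applied with Hurst index $HK$ (and the same exponent $p=\max\{1,1/(2HK)\}$), yielding the required $O((b-a)^{2HK})$ and $O(\Delta_i^{1\wedge(2HK)})$ bounds. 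The remaining task is the corresponding estimate for $\Psi$.

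For $\Psi$, write $\Delta_{i,j}\Psi=\int_{s_{i-1}^{2H}}^{s_i^{2H}}[\phi'(x+t_j^{2H})-\phi'(x+t_{j-1}^{2H})]\,dx$ with $\phi(x)=x^K$; concavity of $\phi$ ($K<1$) gives $\Delta_{i,j}\Psi\leq 0$. Thus, over any subrectangle $R$, all double increments of $\Psi$ arising from any refinement share the same sign, so $V_1(\Psi;R)$ equals the absolute value of the single (telescoped) double increment of $\Psi$ over $R$; since $V_p\leq V_1$ for $p\geq 1$, this bound also controls $V_p(\Psi;R)$. For (\ref{2bfBm2}), summing in $j$ telescopes to $\sum_j|\Delta_{i,j}\Psi|=[s_i^{2HK}-s_{i-1}^{2HK}]-[\Psi(s_i,T)-\Psi(s_{i-1},T)]\leq s_i^{2HK}-s_{i-1}^{2HK}$, which is $\leq C\Delta_i^{1\wedge(2HK)}$ by the mean value theorem if $2HK\geq 1$ and by subadditivity of $x\mapsto x^{2HK}$ if $2HK<1$. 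For (\ref{1bfBm2}), the analogous telescoping gives $V_1(\Psi;[a,b]^2)=2[(a^{2H}+b^{2H})^K-2^{K-1}(a^{2HK}+b^{2HK})]$, and the main obstacle is to show this is $O((b-a)^{2HK})$ uniformly in $a$. I split into two cases: when $a\leq b/2$ one simply uses $b\leq 2(b-a)$ and $(a^{2H}+b^{2H})^K\leq 2^K b^{2HK}$; when $a>b/2$ I set $t=b/a\in(1,2]$ and write $V_1=2a^{2HK}g(t)$ with $g(t):=(1+t^{2H})^K-2^{K-1}(1+t^{2HK})$, verify $g(1)=g'(1)=0$ and $g''$ bounded on $[1,2]$, and conclude via Taylor that $g(t)\leq C(t-1)^2\leq C(t-1)^{2HK}$, so $V_1\leq 2Ca^{2HK}(t-1)^{2HK}=2C(b-a)^{2HK}$. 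Combining the bounds for $\Psi$ and $\Phi$ using the seminorm property of $V_p$ gives both (\ref{1bfBm2}) and (\ref{2bfBm2}).
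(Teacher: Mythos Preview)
Your proof is correct. Both arguments rest on the same underlying idea: split $C_{H,K}$ into a piece whose double increments coincide with those of a multiple of the fBm covariance $F_{HK}$ (handled by Proposition~\ref{fBm1} with Hurst index $HK$) and a ``smooth'' piece with constant-sign double increments (so that its $V_1$ over any rectangle telescopes to a single increment). In fact, since $-AD_{H,K}$ and $2^{-K}\Psi$ differ only by single-variable terms, your decomposition and the paper's Lei--Nualart decomposition $C_{H,K}=-AD_{H,K}+BF_{HK}$ agree at the level of double increments.

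The genuine difference is in execution. The paper imports the Lei--Nualart result to interpret the smooth piece as the covariance $D_{H,K}$ of a process $Y_{H,K}$ with positively correlated increments, then invokes Proposition~\ref{SP1}(i); the diagonal bound (\ref{1bfBm2}) for this piece then drops out immediately from the left inequality in (\ref{bfBm1}), with no further computation. Your route is more self-contained: you read off the sign of $\Delta_{i,j}\Psi$ directly from concavity of $x\mapsto x^K$ (avoiding the external citation), but you then have to work a bit harder for (\ref{1bfBm2}), using the case split $a\le b/2$ versus $a>b/2$ and a Taylor expansion of $g(t)=(1+t^{2H})^K-2^{K-1}(1+t^{2HK})$ at $t=1$. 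Both approaches yield the same off-diagonal estimate (\ref{2bfBm2}) by the identical telescoping computation. In short: the paper's argument is shorter for (\ref{1bfBm2}) by leveraging (\ref{bfBm1}), while yours is more elementary and avoids the Lei--Nualart dependence.
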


\begin{pf}
Concerning the property of local stationarity of increments of
$B_{H,K}$ with
the local variance function $\rho=\rho_{H,K}(\cdot)$
note that condition (A1) in Definition~\ref{LV}
holds with $L=2^{1-K}$ by (\ref{bfBm1}).
To prove condition (A2) let $\epsilon>0$.
For each $s\in[\epsilon,T)$ and $t\in(-\epsilon,T-s]$ let
\[
f_s(t):=2^{1-K} \bigl[s^{2H}+(s+t)^{2H}
\bigr]^K-s^{2HK}-(s+t)^{2HK}.
\]
Then $f_s(0)=f_s'(0)=0$ and
\[
b(s,s+h):= \biggl(\frac{\sigma_{B_{H,K}}(s,s+h)}{\rho_{H,K}(h)} \biggr)^2-1=-2^{K-1}h^{-2HK}f_s(h).
\]
Let $s\in[\epsilon,T)$ and $h\in(0,T-s]$.
By Taylor's theorem with the Lagrange remainder applied to the function
$f_s$, there exists $u=u(s,h)\in(0,h)$ such that\vspace*{-1.5pt}
\[
b(s,s+h)=-2^{K-2}h^{2(1-HK)}f_s''(u),
\]
where\vspace*{-1.5pt}
\begin{eqnarray*}
f_s''(u)&=&2^{3-K}K(K-1)H^2
\bigl[(s+u)^{2H}+s^{2H}\bigr]^{K-2}(s+u)^{2(2H-1)}
\\[-1.5pt]
&&{}+2^{2-K}KH(2H-1)\bigl[(s+u)^{2H}+s^{2H}
\bigr]^{K-1}(s+u)^{2H-2}
\\[-1.5pt]
&&{}-2HK(2HK-1) (s+u)^{2HK-2}.
\end{eqnarray*}
Then there is a finite constant $C=C(\epsilon,H,K)$ such that\vspace*{-1.5pt}
\[
\bigl|b(s,s+h)\bigr|\leq Ch^{2(1-HK)}
\]
for each $s\in[\epsilon,T)$ and $h\in(0,T-s]$.
Since $HK<1$, the preceding bound yields that condition (A2) holds,
and so $B_{H,K}\in\mathcal{LSI}(\rho_{H,K}(\cdot))$.

To prove (\ref{1bfBm2}) and (\ref{2bfBm2}), we use a decomposition
in distribution of a fractional Brownian motion $B_{HK}$ with the Hurst
index $HK$
into a linear combination of a bifractional Brownian motion $B_{H,K}$
and a Gaussian process $Y_{H,K}$ with the covariance function\vspace*{-1.5pt}
%
\begin{equation}
\label{3bfBm2} D_{H,K}(s,t):=\frac{\Gamma(1-K)}{K} \bigl[t^{2HK}+s^{2HK}-
\bigl(t^{2H} +s^{2H}\bigr)^K \bigr],
\end{equation}
$(s,t)\in[0,T]^2$, due to  \cite{LandN2009},
Proposition~1.
Letting $A:=2^{-K}K/\Gamma(1-K)$ and $B:=2^{1-K}$,
by the decomposition we have the relation\vspace*{-1.5pt}
%
\begin{equation}
\label{4bfBm2} C_{H,K}=-AD_{H,K} +BF_{HK}
\end{equation}
between the covariance functions of $B_{H,K}$, $Y_{H,K}$ and $B_{HK}$,
respectively.
For any $0\leq u<v\leq T$ and $0\leq s<t\leq T$, if $Q=[u,v]\times
[s,t]$ and
$f(r)=f_{u,v}(r):=(u^{2H}+r^{2H})^K-(v^{2H}+r^{2H})^K$ for $r\geq0$, then\vspace*{-1.5pt}
\[
\Delta^QD_{H,K}=\frac{\Gamma(1-K)}{K}\bigl[f(t)-f(s)
\bigr]>0,
\]
since $f'(r)>0$ for each $r>0$, and so $Y_{H,K}$ has positively correlated
increments.
Let $0\leq a<b\leq T$.
Since $V_p(\cdot)\leq V_1(\cdot)$,
by part (i) of Proposition~\ref{SP1}, it follows that\vspace*{-1.5pt}
\begin{eqnarray*}
V_p\bigl(C_{H,K};[a,b]^2\bigr) &\leq& A
V_1\bigl(D_{H,K};[a,b]^2\bigr)+BV_p
\bigl(F_{HK};[a,b]^2\bigr)
\\[-1.5pt]
&=& AE\bigl[Y_{H,K}(b)-Y_{H,K}(a)\bigr]^2+BV_p
\bigl(F_{HK};[a,b]^2\bigr).
\end{eqnarray*}
Using (\ref{3bfBm2}) we have\vspace*{-1.5pt}
\begin{eqnarray*}
AE\bigl[Y_{H,K}(b)-Y_{H,K}(a)\bigr]^2&=&2^{-K}
\bigl[2\bigl(b^{2H}+a^{2H}\bigr)^K-2^Kb^{2HK}
-2^Ka^{2HK} \bigr]
\\[-1.5pt]
&\leq& 2^{-K}(b-a)^{2HK}
\end{eqnarray*}
by the left inequality in (\ref{bfBm1}).
Using inequality (\ref{1fBm1}) for the fractional Brownian motion with
the Hurst
index $HK$, the first desired bound (\ref{1bfBm2})
with $C_1=5\cdot2^{-K}$ follows.

To prove the second desired bound (\ref{2bfBm2}) let
$(t_j)_{j\in[m]}$ be a partition of $[0,T]$ and let $i\in(m]$.
Again, since $Y_{H,K}$ has positively correlated increments and using
(\ref{3bfBm2})
it follows that
\begin{eqnarray*}
A\sum_{j=1}^m\bigl|E\Delta_i^{\kappa}Y_{H,K}
\Delta_j^{\kappa}Y_{H,K}\bigr| &=&AE\bigl[Y_{H,K}(t_i)-Y_{H,K}(t_{i-1})
\bigr]Y_{H,K}(T)
\\
&=&2^{-K} \bigl[t_i^{2HK}-t_{i-1}^{2HK}+
\bigl(t_{i-1}^{2H}+T^{2H}\bigr)^K -
\bigl(t_i^{2H}+T^{2H}\bigr)^K \bigr]
\\
&\leq& \lleft\{ %
\begin{array} {l@{\qquad}l} 2^{-K}(t_i-t_{i-1})^{2HK},
&\mbox{if $2HK< 1$,}
\\
2^{1-K}HKT^{2HK-1}(t_i-t_{i-1}), &
\mbox{if $2HK\geq1$.}
\\
\end{array} %
\rright.
\end{eqnarray*}
Since $V_p(\cdot)\leq V_1(\cdot)$, using (\ref{4bfBm2}), (\ref{2fBm1})
with $HK$ in place of $H$, and the preceding inequality, it follows that
\begin{eqnarray*}
&&\sum_{j=1}^mV_p
\bigl(C_{H,K};J_i^{\kappa}\times J_j^{\kappa}
\bigr)
\\
&&\quad \leq A\sum_{j=1}^m
\bigl|E\bigl[\Delta_i^{\kappa}Y_{H,K}\Delta
_j^{\kappa}Y_{H,K}\bigr] \bigr| +B\sum
_{j=1}^m \bigl|E\bigl[\Delta_i^{\kappa}B_{HK}
\Delta_j^{\kappa
}B_{HK}\bigr] \bigr|
\\
&&\quad \leq C_2(t_i-t_{i-1})^{1\wedge(2HK)},
\end{eqnarray*}
where $C_2=7\cdot2^{-K}$ if $2HK<1$
and $C_2=6HK2^{-K}T^{2HK-1}$ if $2HK\geq1$.
This completes the proof of the proposition.
\end{pf}

\section{Proof of the main result}\label{sec4}

The main result is Theorem~\ref{asconvergence} below
dealing with almost sure convergence of sums of properly
normalized powers of increments of the q.m. integral process (\ref{Y1}).
First, we prove a convergence of the mean of such sums
under less restrictive assumptions.

\begin{teo}\label{mean}
Let $r>0$ and $T>0$.
Let $X=\{X(t)\dvtx  t\in[0,T]\}$ be a mean zero Gaussian process from
the class
$\mathcal{LSI}(\rho(\cdot))$ with the covariance function $\Gamma_X$
such that for a constant $C_1$ and a number $p\geq1$ the inequality
%
\begin{equation}
\label{G7} V_p\bigl(\Gamma_X;[s,t]^2
\bigr)\leq C_1 \bigl[\rho(t-s)\bigr]^{2}
\end{equation}
holds for all $0\leq s<t\leq T$.
Let $f\in\CLW_q[0,T]$ with $q\in\CLQ_p$ and
let $(\kappa_n)$ be a sequence
of partitions $\kappa_n=(t_i^n)_{i\in[m_n]}$ of $[0,T]$ such that
$|\kappa_n|\to0$ as $n\to\infty$.
Then there exists the q.m. integral process $Y(t)=\mathrm{q.m.}\int_0^tf
\,\mathrm{d}X$, $t\in[0,T]$,
and
%
\begin{equation}
\label{1mean} \lim_{n\to\infty}\sum_{i=1}^{m_n}
\frac{E|\Delta_i^nY|^r}{[\rho
(\Delta_i^n)]^{r}} \Delta_i^n =E|\eta|^r\int
_0^T|f|^r,
\end{equation}
where $\eta$ is a standard normal random variable.
\end{teo}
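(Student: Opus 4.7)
The plan is to reduce the claim to a deterministic Riemann-sum convergence by exploiting Gaussianity of $X$ to identify the distribution of $\Delta_i^n Y$, using Proposition~\ref{Loeve1} to represent $\mathrm{Var}(\Delta_i^n Y)$ as a double Stieltjes integral, and then invoking Corollary~\ref{LL2}(i) together with condition $(A2)$ to pin down the asymptotics of that variance.

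First I would verify existence of $Y$: the hypothesis (\ref{G7}) combined with continuity of $\rho$ at zero gives $\lim_{s\uparrow t}V_p(\Gamma_X;[s,t]^2)=0$ for every $t$, so Theorems~\ref{integral} and~\ref{Y2} apply, and Theorem~\ref{integral} moreover yields the global bound $\sigma_i^2:=E[\Delta_i^n Y]^2\leq K(p,f)C_1[\rho(\Delta_i^n)]^2$. Since $X$ is mean-zero Gaussian and $\Delta_i^n Y$ is the $L^2$-limit of linear combinations of Gaussians, it is itself $N(0,\sigma_i^2)$, so $E|\Delta_i^n Y|^r=E|\eta|^r\sigma_i^r$, and the theorem reduces to
$$
\sum_{i=1}^{m_n}\Big(\frac{\sigma_i^2}{[\rho(\Delta_i^n)]^2}\Big)^{r/2}\Delta_i^n\longrightarrow\int_0^T|f|^r.
$$

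Next I would extract the asymptotic shape of $\sigma_i^2/[\rho(\Delta_i^n)]^2$. By Proposition~\ref{Loeve1}, $\sigma_i^2=\int_{J_i^n}\int_{J_i^n}f\otimes f\,d^2\Gamma_X$; decomposing $f\otimes f = f^2(t_{i-1}^n)+[f\otimes f-f^2(t_{i-1}^n)]$ gives
$$
\sigma_i^2 = f^2(t_{i-1}^n)\,\sigma_X^2(t_{i-1}^n,t_i^n) + R_i^n,
$$
where $|R_i^n|\leq C\,\omega_i^n\,[\rho(\Delta_i^n)]^2$ with $\omega_i^n=V_q(f;J_i^n)$ by Corollary~\ref{LL2}(i) when $p>1$, and $\omega_i^n=\mathrm{osc}(f;J_i^n)$ by Theorem~\ref{DRS} when $p=1$. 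Meanwhile $(A2)$ gives $\sigma_X^2(t_{i-1}^n,t_i^n)/[\rho(\Delta_i^n)]^2=1+o(1)$ uniformly in indices with $t_{i-1}^n\geq\epsilon$, for any fixed $\epsilon\in(0,T)$.

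Finally, I would fix $\epsilon\in(0,T)$ and split the sum at $t_{i-1}^n=\epsilon$. For indices with $t_{i-1}^n\geq\epsilon$ the previous step yields $\sigma_i^2/[\rho(\Delta_i^n)]^2 = f^2(t_{i-1}^n)+o(1)+O(\omega_i^n)$; applying $x\mapsto x^{r/2}$---which is Lipschitz on bounded sets when $r\geq 2$ and satisfies $(a+b)^{r/2}\leq a^{r/2}+b^{r/2}$ when $0<r<2$---and weighting by $\Delta_i^n$, the main piece is a Riemann sum $\sum|f(t_{i-1}^n)|^r\Delta_i^n$ for $\int_\epsilon^T|f|^r$ (which converges because any $f\in\CLW_q[0,T]$ is regulated and hence Riemann integrable), while the $\omega$-error vanishes via $\sum_i\omega_i^n\Delta_i^n\to 0$ (for $p>1$ this is Hölder with $\sum V_q(f;J_i^n)^q\leq V_q(f;[0,T])^q$, giving a bound of order $|\kappa_n|^{1/q}$; for $p=1$ this follows from approximating regulated $f$ by step functions). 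For indices with $t_{i-1}^n<\epsilon$ the global bound $\sigma_i^2\leq K(p,f)C_1[\rho(\Delta_i^n)]^2$ together with $\|f\|_{\sup}<\infty$ keeps the contribution at most $C\epsilon$; letting $\epsilon\downarrow 0$ concludes. The main obstacle is the interaction between the $\omega$-error inside the nonlinear map $x\mapsto x^{r/2}$; this forces the case split $r\geq 2$ versus $r<2$ and the separate handling of the border case $p=1$ with oscillation in place of $q$-variation.
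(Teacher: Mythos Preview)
Your proposal is correct and follows essentially the same route as the paper: Gaussianity reduces the moment to a variance, Proposition~\ref{Loeve1} together with Corollary~\ref{LL2}(i) (for $p>1$) or Theorem~\ref{DRS} (for $p=1$) isolates $f^2(t_{i-1}^n)\,\sigma_X^2(t_{i-1}^n,t_i^n)$ plus a remainder controlled by $V_q(f;J_i^n)$ or $\mathrm{osc}(f;J_i^n)$, and an $\epsilon$-split together with $(A1)$--$(A2)$ handles the ratio $\sigma_X^2/\rho^2$. The only cosmetic differences are that the paper uses Minkowski's inequality on weighted $\ell^{r/2}$ sums (rather than a Lipschitz bound for $x\mapsto x^{r/2}$) when $r\geq 2$, and when $p>1$ it extracts a small oscillation factor via a $q'>q$ interpolation (rather than your H\"older argument with the superadditivity $\sum_i V_q(f;J_i^n)^q\leq V_q(f;[0,T])^q$) to make the remainder sum vanish.
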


\begin{pf}
Since $\rho(\cdot)$ is continuous at zero, by (A1) of Definition~\ref{LV},
it follows that $\Gamma_X$ is a continuous function.
Then the q.m. integral process $Y$ exists by Theorems \ref{integral}
and \ref{Y2}.
We shall prove (\ref{1mean}).
Since $Y$ is a Gaussian process, 
for $0\leq s<t\leq T$ we have
\[
E \bigl|Y(t)-Y(s) \bigr|^r =E|\eta|^r \biggl(E \biggl[\int
_{s}^{t}f \,\mathrm{d}X \biggr]^2
\biggr)^{r/2}.
\]
By (\ref{1Loeve1}), we have
\begin{eqnarray*}
E \biggl[\int_{s}^{t}f \,\mathrm{d}X \biggr]^2&=&
\int_{s}^{t}\int_{s}^{t}
f\otimes f \,\mathrm{d}^2\Gamma_X
\\
&=& \biggl|\int_{s}^{t}\int_{s}^{t}
\bigl[f\otimes f-f^2(s) \bigr]\, \mathrm{d}^2\Gamma_X
+f^2(s)E \bigl[X(t)-X(s) \bigr]^2 \biggr|.
\end{eqnarray*}
For $(s,t)\in[0,T]^2$ let
%
\begin{equation}
\label{G5} b(s,t):= \frac{\sigma_X^2(s,t)}{[\rho(|t-s|)]^2}-1,
\end{equation}
if $s\neq t$, and let $b(s,t):=0$ if $s=t$.
Then
\begin{eqnarray*}
R_n&:=&\bigl(E|\eta|^r\bigr)^{-1}\sum
_{i=1}^{m_n}\frac{E|\Delta_i^nY|^r
}{[\rho(\Delta_i^n)]^{r}}\Delta_i^n
\\
&=&\sum_{i=1}^{m_n} \biggl|\frac{1}{[\rho(\Delta_i^n)]^{2}}
\int_{t_{i-1}^n}^{t_i^n}\int_{t_{i-1}^n}^{t_i^n}
\bigl[f\otimes f-f^2\bigl(t_{i-1}^n\bigr)
\bigr] \,\mathrm{d}^2\Gamma_X
 +f^2\bigl(t_{i-1}^n\bigr) \bigl[1 + b
\bigl(t_{i-1}^n,t_i^n\bigr) \bigr]
\biggr|^{r/2}\Delta_i^n
\end{eqnarray*}
for each $n$.
Also for each integer $n\geq1$, let
\[
T_n:=\sum_{i=1}^{m_n} \bigl
\{f^2\bigl(t_{i-1}^n\bigr) \bigr
\}^{r/2}\Delta_i^n, \qquad U_n:=\sum
_{i=1}^{m_n} \bigl\{f^2
\bigl(t_{i-1}^n\bigr)\bigl|b\bigl(t_{i-1}^n,t_i^n
\bigr)\bigr| \bigr\}^{r/2}\Delta_i^n
\]
and
\[
W_n:=\sum_{i=1}^{m_n} \biggl\{
\frac{1}{[\rho(\Delta_i^n)]^{2}} \biggl|\int_{t_{i-1}^n}^{t_i^n}\int
_{t_{i-1}^n}^{t_i^n} \bigl[f\otimes f-f^2
\bigl(t_{i-1}^n\bigr) \bigr] \,\mathrm{d}^2
\Gamma_X \biggr| \biggr\} ^{r/2}\Delta_i^n.
\]
If $r<2$ then using the inequality $||A|^{r/2}-|B|^{r/2}|\leq
|A-B|^{r/2}$ it follows that
%
\begin{equation}
\label{4mean} |R_n-T_n|\leq U_n+W_n
\end{equation}
for each $n$.
If $r\geq2$, then
using the Minkowskii inequality for weighted sums, it follows that
%
\begin{equation}
\label{2mean} \bigl|R_n^{2/r}-T_n^{2/r}\bigr|
\leq U_n^{2/r}+W_n^{2/r}
\end{equation}
for each $n$.
Recall that the mesh $|\kappa_n|\to0$ as $n\to\infty$.
Therefore since $f$ is regulated, and so $|f|^r$ is Riemann integrable,
it follows that
%
\begin{equation}
\label{3mean} \lim_{n\to\infty}T_n =\lim
_{n\to\infty}\sum_{i=1}^{m_n} \bigl|f
\bigl(t_{i-1}^n\bigr) \bigr|^{r} \Delta_i^n
=\int_0^T|f|^r.
\end{equation}
We will show that $U_n$ and $W_n$ tend to zero as $n\to\infty$.
Assuming this, by (\ref{4mean}) if $r<2$, by (\ref{2mean}) if $r\geq
2$ and (\ref{3mean}),
the conclusion (\ref{1mean}) follows.

To prove convergence of $U_n$ let $\epsilon>0$.
Recalling notation (\ref{G5}) and using condition (A1) of Definition~\ref{LV},
we have
\[
\bigl|b\bigl(t_{i-1}^n,t_i^n\bigr)\bigr|
\leq\frac{E [\Delta_i^nX]^2}{[\rho(\Delta_i^n)]^2} +1\leq L^2+1
\]
for each $n\in\NN$ and $i\in(m_n]$.
For each $\delta>0$ letting
%
\begin{equation}
\label{G4} \phi_{\epsilon}(\delta):=\sup \bigl\{\bigl|b(s,s+h)\bigr|\dvtx  s\in
[\epsilon,T), h\in\bigl(0,\delta\wedge(T-s)\bigr] \bigr\}
\end{equation}
it follows that
\[
\sum_{i\colon t_{i-1}^n\geq\epsilon} \bigl\{f^2
\bigl(t_{i-1}^n\bigr) \bigl|b\bigl(t_{i-1}^n,t_i^n
\bigr)\bigr| \bigr\}^{r/2}\Delta_i^n \leq \bigl\{
\phi_{\epsilon}\bigl(|\kappa_n|\bigr) \bigr\}^{r/2}\sum
_{i=1}^{m_n} \bigl|f\bigl(t_{i-1}^n
\bigr)\bigr|^r\Delta_i^n
\]
for all $n\in\NN$.
Then for each $n\in\NN$
we have
\begin{eqnarray*}
U_n&\leq& \biggl(\sum_{i\colon t_{i-1}^n\geq\epsilon} + \sum
_{i\colon t_{i-1}^n<\epsilon} \biggr) \bigl\{f^2
\bigl(t_{i-1}^n\bigr)\bigl|b\bigl(t_{i-1}^n,t_i^n
\bigr)\bigr| \bigr\}^{r/2}\Delta_i^n
\\
&\leq& \bigl\{\phi_{\epsilon}\bigl(|\kappa_n|\bigr) \bigr
\}^{r/2}\sum_{i=1}^{m_n} \bigl|f
\bigl(t_{i-1}^n\bigr)\bigr|^r\Delta_i^n
+\bigl(\epsilon+|\kappa_n|\bigr)\|f\|_{\sup}^r
\bigl(L^2+1\bigr)^{r/2}.
\end{eqnarray*}
By conditions (A1) and (A2) of Definition~\ref{LV}, and since the
Riemann sums are bounded
as $|\kappa_n|\to0$ with $n\to\infty$,
$U_n$ tends to zero as $n\to\infty$.

We prove convergence of $W_n$ first assuming that $p=1$.
In this case, by (\ref{1DRS}) and (\ref{G7}), we have
%
\begin{eqnarray}\label{mean1}
W_n&\leq& \bigl(2\|f\|_{\sup} \bigr)^{\sfrac{r}{2}}\sum
_{i=1}^{m_n} \biggl\{\frac{V_1(\Gamma_X; [t_{i-1}^n,t_i^n]^2)}{[\rho(\Delta_i^n)]^{2}} \osc\bigl(f;
\bigl[t_{i-1}^n,t_i^n\bigr]\bigr)
\biggr\}^{\sfrac{r}{2}} \Delta_i^n
\nonumber
\\[-8pt]\\[-8pt]
&\leq& \bigl(2C_1\|f\|_{\sup} \bigr)^{\sfrac{r}{2}}\sum
_{i=1}^{m_n} \bigl\{ \osc\bigl(f;\bigl[t_{i-1}^n,t_i^n
\bigr]\bigr) \bigr\}^{\sfrac{r}{2}} \Delta_i^n.\nonumber
\end{eqnarray}
Let $\epsilon>0$.
Since $f$ is a regulated function there is a partition $\{s_j\}_{j=0}^k$
of $[0,T]$\vspace*{2pt} such that $\osc(f;(s_{j-1},s_j))<\epsilon$ for each $j\in(k]$
by Theorem~2.1 in \cite{DandN2010}.
Since $|\kappa_n|\to0$ as $n\to\infty$ there is an $N\in\NN$
such that $|\kappa_n|<\epsilon/(2k)$ for each $n\geq N$.
For each $n$ let $J_n$ be the set of indices $i\in(m_n]$
such that $s_j\in[t_{i-1}^n,t_i^n]$ for some $j\in[k]$
and let $J_n^c:=(m_n]\setminus J_n$.
Then the cardinality of $J_n$ does not exceed $2k$,
and continuing (\ref{mean1}), we have for each $n\geq N$
\begin{eqnarray*}
W_n &\leq& \bigl(4C_1\|f\|_{\sup}^2
\bigr)^{\sfrac{r}{2}}\sum_{i\in
J_n}\Delta_i^n
+ \bigl(2\epsilon C_1\|f\|_{\sup} \bigr)^{\sfrac{r}{2}}\sum
_{i\in
J_n^c}\Delta_i^n
\\
&\leq& \epsilon \bigl(4C_1\|f\|_{\sup}^2
\bigr)^{\sfrac{r}{2}} + \bigl(2\epsilon C_1\|f\|_{\sup}
\bigr)^{\sfrac{r}{2}}T,
\end{eqnarray*}
since the mesh $|\kappa_n|<\epsilon/(2k)$.

Now suppose that $p>1$. Let $q'>q$ be such that $\frac{1}{p}+\frac{1}{q}>
\frac{1}{p}+\frac{1}{q'}>1$.
By (\ref{1LL2}), we have
\begin{eqnarray*}
W_n&\leq& \bigl(K_{p,q'}\|f\|_{[q']}\bigr)^{\sfrac{r}{2}}
\sum_{i=1}^{m_n} \biggl\{ \frac{V_p(\Gamma_X;[t_{i-1}^n,t_i^n]^2)}{[\rho(\Delta_i^n)]^{2}}
V_{q'}\bigl(f;\bigl[t_{i-1}^n,t_i^n
\bigr]\bigr) \biggr\}^{\sfrac{r}{2}}\Delta_i^n
\\
&\leq& \bigl(K_{p,q'}\|f\|_{[q']} C_1
V_q(f)^{\sfrac{q}{q'}} \bigr)^{\sfrac{r}{2}} \sum
_{i=1}^{m_n} \bigl\{\osc\bigl(f;\bigl[t_{i-1}^n,t_i^n
\bigr]\bigr)^{1-\sfrac
{q}{q'}} \bigr\}^{\sfrac{r}{2}} \Delta_i^n.
\end{eqnarray*}
Since a function of bounded $q$-variation is regulated the
arguments used in the preceding case $p=1$ apply and show that
$W_n$ tends to zero as $n\to\infty$.
The theorem is proved.
\end{pf}

In the case $f\equiv1$, we have the following conclusion.

\begin{cor}\label{meanX}
Let $r>0$ and $T>0$.
Let $X=\{X(t)\dvtx t\in[0,T]\}$ be a mean zero Gaussian process from
the class
$\mathcal{LSI}(\rho(\cdot))$, and let $(\kappa_n)$ be a sequence
of partitions $\kappa_n=(t_i^n)_{i\in[m_n]}$ of $[0,T]$ such that
$|\kappa_n|\to0$ as $n\to\infty$.
Then
\[
\lim_{n\to\infty}\sum_{i=1}^{m_n}
\frac{E|\Delta_i^nX|^r}{[\rho
(\Delta_i^n)]^{r}} \Delta_i^n =E|\eta|^rT,
\]
where $\eta$ is a standard normal random variable.
\end{cor}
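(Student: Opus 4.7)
My plan is to observe that with $f\equiv 1$ the corollary is essentially a simplified instance of the proof of Theorem \ref{mean}, and in fact does not require the existence of the q.m.\ integral nor the $p$-variation hypothesis (\ref{G7}). Indeed, when $f\equiv 1$ the Riemann--Stieltjes sums defining the q.m.\ integral telescope to $X(t)-X(0)$, so I may take $\Delta_i^nY=\Delta_i^nX$ directly, and the double integral $\smallint\smallint [f{\otimes}f-f^2(s)]\,d^2\Gamma_X$ that produced the term $W_n$ in the proof of Theorem \ref{mean} now vanishes identically.

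Concretely, since $X$ is Gaussian and mean zero, $\Delta_i^nX$ is centered Gaussian with variance $\sigma_X^2(t_{i-1}^n,t_i^n)$, so
$$
E|\Delta_i^nX|^r=E|\eta|^r\,\sigma_X(t_{i-1}^n,t_i^n)^r.
$$
Dividing by $[\rho(\Delta_i^n)]^r$ and using the notation $b(s,t)=\sigma_X^2(s,t)/[\rho(|t-s|)]^2-1$ introduced in (\ref{G5}), the quantity in the corollary becomes
$$
R_n:=(E|\eta|^r)^{-1}\sum_{i=1}^{m_n}\frac{E|\Delta_i^nX|^r}{[\rho(\Delta_i^n)]^r}\Delta_i^n
=\sum_{i=1}^{m_n}\bigl(1+b(t_{i-1}^n,t_i^n)\bigr)^{r/2}\Delta_i^n,
$$
while the target is $T=\sum_{i=1}^{m_n}\Delta_i^n$. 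So it suffices to show $R_n\to T$.

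The key step is to estimate the difference $R_n-T$ by
$$
U_n:=\sum_{i=1}^{m_n}|b(t_{i-1}^n,t_i^n)|^{r/2}\Delta_i^n,
$$
via $||A|^{r/2}-|B|^{r/2}|\le|A-B|^{r/2}$ when $r<2$ (yielding $|R_n-T|\le U_n$), and via Minkowski's inequality for weighted sums when $r\ge 2$ (yielding $|R_n^{2/r}-T^{2/r}|\le U_n^{2/r}$). Both reductions appear already in the proof of Theorem \ref{mean}.

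The main (and only real) obstacle is showing $U_n\to 0$, and here I would copy the argument given for the corresponding term in Theorem \ref{mean}: for any $\epsilon>0$, using hypothesis $(A1)$ of Definition \ref{LSI} one has a uniform bound $|b(s,t)|\le L^2+1$, so the contribution to $U_n$ from indices $i$ with $t_{i-1}^n<\epsilon$ is at most $(\epsilon+|\kappa_n|)(L^2+1)^{r/2}$; for indices with $t_{i-1}^n\ge\epsilon$, one uses the modulus $\phi_\epsilon(|\kappa_n|)$ defined in (\ref{G4}), which tends to $0$ as $|\kappa_n|\to 0$ by condition $(A2)$, bounding that partial sum by $\{\phi_\epsilon(|\kappa_n|)\}^{r/2}T$. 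Letting $n\to\infty$ and then $\epsilon\downarrow 0$ gives $U_n\to 0$, and combining with (\ref{3mean})-type telescoping for $T_n=T$ yields the corollary.
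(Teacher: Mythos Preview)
Your proposal is correct and follows essentially the same approach as the paper: specialize the proof of Theorem \ref{mean} to $f\equiv 1$, so that $W_n=0$, $T_n=T$, and it remains only to show $U_n\to 0$ via the same $(A1)$/$(A2)$ splitting argument. You also correctly note that the $p$-variation hypothesis (\ref{G7}) is not needed here, which is exactly why the corollary's statement omits it.
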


\begin{pf}
In the proof of Theorem~\ref{mean} taking $f\equiv1$ it follows that
for each
$n\geq1$, in the present case we have $T_n=T$, $W_n=0$,
\[
R_n=\sum_{i=1}^{m_n}\bigl[1+b
\bigl(t_{i-1}^n,t_i^n\bigr)
\bigr]^{r/2}\Delta_i^n \quad \mbox{and}\quad
U_n=\sum_{i=1}^{m_n}\bigl[b
\bigl(t_{i-1}^n,t_i^n\bigr)
\bigr]^{r/2}\Delta_i^n.
\]
The argument used in the proof of Theorem~\ref{mean} gives that
$U_n\to0$ as
$n\to\infty$, and so $R_n\to T$ as $n\to\infty$, proving the corollary.
\end{pf}

%
%

Next, is the main result.

\begin{teo}\label{asconvergence}
Let $T>0$, let $\rho\in R[0,T]$ be such that
$\gamma_{\ast}(\rho)=\gamma$ for some $\gamma\in(0,1)$,
let $p:=\max\{1,1/(2\gamma)\}$, and let $1<r<2/\max\{(2\gamma-1),0\}$.
Let $X$ be a mean zero Gaussian process from the class $\mathcal
{LSI}(\rho(\cdot))$
with the covariance function $\Gamma_X$.
Suppose that there is a constant $C_1$ such that
\textup{(\ref{G7})} holds for all $0\leq s<t\leq T$,
and there is a constant $C_2$ such that the inequality
%
\begin{equation}
\label{3asconvergence} \sum_{j=1}^{m}V_p
\bigl(\Gamma_X;J_i^{\kappa}\times
J_j^{\kappa}\bigr) \leq C_2 \bigl(
\Delta_i^{\kappa}\bigr)^{1\wedge(2\gamma)}
\end{equation}
holds for each partition $\kappa=(t_j)_{j\in[m]}$ of $[0,T]$ and each
$i\in(m]$.
Let $f\in\CLW_q[0,T]$ with $q\in\CLQ_p$, and
let $(\kappa_n)$ be a sequence of partitions $\kappa_n=(t_i^n)_{i\in[m_n]}$
of $[0,T]$ such that
%
\begin{equation}
\label{2asconvergence} \sup\Bigl\{\alpha\dvtx \lim_{n\to\infty}|
\kappa_n|^{\alpha}\log n=0\Bigr\} = \biggl(1\wedge
\frac{2}{r} \biggr)+ \bigl(0\wedge(1-2\gamma) \bigr).
\end{equation}
Then there exists the q.m. integral process $Y(t)=\mathrm{q.m.}\int_0^tf \,\mathrm{d}X$,
$t\in[0,T]$, and with probability one
%
\begin{equation}
\label{4asconvergence} \lim_{n\to\infty}\sum_{i=1}^{m_n}
\frac{|\Delta_i^nY|^r}{[\rho
(\Delta_i^n)]^{r}} \Delta_i^n=E|\eta|^r\int
_0^T|f|^r,
\end{equation}
where $\eta$ is a standard normal random variable.
\end{teo}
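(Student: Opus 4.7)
The plan is to combine Theorem \ref{mean} on convergence of expectations with a variance bound and an exponential concentration inequality for Gaussian chaos, then conclude by Borel--Cantelli. Set $S_n := \sum_{i=1}^{m_n}|\Delta_i^n Y|^r\Delta_i^n/[\rho(\Delta_i^n)]^r$. Taking a one-interval partition in (\ref{3asconvergence}) recovers (\ref{G7}), so by Theorems \ref{integral} and \ref{Y2} the q.m.\ integral process $Y$ is defined, and Theorem \ref{mean} gives $\EE[S_n]\to\EE|\eta|^r\int_0^T|f|^r$; it therefore remains to prove $S_n-\EE[S_n]\to 0$ almost surely.

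For the variance estimate, write $\xi_i := \Delta_i^n Y$, $g_{ij}^n:=\EE[\xi_i\xi_j]$, $\sigma_i:=(g_{ii}^n)^{1/2}$. Proposition \ref{Loeve1} combined with Corollary \ref{LL2}(ii) bounds $|g_{ij}^n|\le K_{p,q}\|f\|_{[q]}^2\,V_p(\Gamma_X;J_i^n\times J_j^n)$, so assumption (\ref{3asconvergence}) yields $\sum_j|g_{ij}^n|\le C(\Delta_i^n)^{1\wedge 2\gamma}$, while Theorem \ref{integral} together with (\ref{G7}) gives $\sigma_i\le C\rho(\Delta_i^n)$. The Hermite expansion of $F(x):=|x|^r\in L^2(\gamma)$ combined with Mehler's formula produces, for centred jointly Gaussian $(A,B)$,
\[
\bigl|\mathrm{Cov}(F(A),F(B))\bigr|\le c_r(\sigma_A\sigma_B)^{r-2}\bigl(\EE[AB]\bigr)^2,
\]
which for $1<r\le 2$ further simplifies to $c_r|\EE[AB]|^r$ via the observation $(\sigma_A\sigma_B)^{r-2}\le|\EE[AB]|^{r-2}$ (use $\sigma_A\sigma_B\ge|\EE[AB]|$ and $r-2\le 0$). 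Substituting into $\mathrm{Var}(S_n)=\sum_{i,j}\mathrm{Cov}(|\xi_i|^r,|\xi_j|^r)\Delta_i^n\Delta_j^n/[\rho(\Delta_i^n)\rho(\Delta_j^n)]^r$, using Lemma \ref{qf} to collapse the resulting symmetric double sum onto one of the form $\sum_i(\Delta_i^n/\rho(\Delta_i^n))^2\sum_j|g_{ij}^n|^{?}$, invoking (\ref{3asconvergence}) for the inner sum, and applying the Orey-index inequality $\rho(u)\ge u^{\gamma+\eta}$ (valid for any $\eta>0$ and small $u$) for the outer one, I expect
\[
\mathrm{Var}(S_n)\le C|\kappa_n|^{\alpha^*-\eta},\qquad \alpha^*:=\bigl(1\wedge\tfrac{2}{r}\bigr)+\bigl(0\wedge(1-2\gamma)\bigr),
\]
for every $\eta>0$ and $n$ sufficiently large.

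The passage to almost sure convergence uses Gaussian chaos concentration. The Hermite expansion $|x|^r=\sum_{k\ge 0}a_{2k}(r)H_{2k}(x)$ (only even terms, since $|x|^r$ is even) decomposes $S_n-\EE[S_n]=\sum_{k\ge 1}V_{n,k}$ with $V_{n,k}$ in the $2k$-th Wiener chaos of the Gaussian field generated by $X$. Nelson's hypercontractivity $\|V_{n,k}\|_{L^{2m}}\le (2m-1)^k\|V_{n,k}\|_{L^2}$ and optimisation of $m$ in Markov's inequality (à la Borell's Gaussian-chaos tail bound), carried out chaos-by-chaos and then summed in $k$, yield
\[
\Pr\bigl(|S_n-\EE[S_n]|>\epsilon\bigr)\le C_\epsilon\exp\bigl(-c_\epsilon/|\kappa_n|^{\alpha^*-\eta}\bigr).
\]
Hypothesis (\ref{2asconvergence}) guarantees $|\kappa_n|^{\alpha^*-\eta}\log n\to 0$, so $1/|\kappa_n|^{\alpha^*-\eta}$ eventually dominates any multiple of $\log n$; the right-hand side above is therefore eventually bounded by $n^{-2}$, hence summable, and Borel--Cantelli delivers (\ref{4asconvergence}).

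The main obstacle is the variance bound. For $1<r\le 2$ the naive inequality $c_r(\sigma_A\sigma_B)^{r-2}(\EE[AB])^2$ is unstable when $\sigma_A$ is small, and one must instead exploit $(\sigma_A\sigma_B)^{r-2}\le|\EE[AB]|^{r-2}$ to arrive at $|\mathrm{Cov}(|A|^r,|B|^r)|\le c_r|\EE[AB]|^r$. Producing the precise exponent $\alpha^*$, and in particular the $1\wedge(2/r)$ contribution, then requires careful use of the $\ell^r$-subadditivity $\sum_j|g_{ij}^n|^r\le(\sum_j|g_{ij}^n|)^r$ (valid for $r\ge 1$) combined with Lemma \ref{qf}, (\ref{3asconvergence}), and the Orey-index weight estimate; the delicate book-keeping that makes the variance exponent match exactly the partition-mesh exponent in (\ref{2asconvergence}) is the technical core of the proof.
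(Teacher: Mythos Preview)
Your approach differs from the paper's in a way that creates a genuine gap at the concentration step. The paper does \emph{not} work with $S_n$ directly; it takes the $r$-th root $Z_n:=S_n^{1/r}$, observes that $Z_n=\|(c_{i,n}^{1/r}\Delta_i^nY)_i\|_{\ell^r}=\sup_{\|b\|_{r'}\le 1}\sum_ib_ic_{i,n}^{1/r}\Delta_i^nY$ is a supremum of centred Gaussians, and applies the Borell--type bound (Marcus--Rosen, Lemma~2.2 in \cite{MandR1992}) to get $\Pr(|Z_n-\mathrm{med}(Z_n)|>\epsilon)\le 2\exp(-\epsilon^2/(2\sigma_n^2))$ with $\sigma_n^2=\sup_{\|b\|_{r'}\le 1}E(\sum_ib_ic_{i,n}^{1/r}\Delta_i^nY)^2$. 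Lemma~\ref{qf} and (\ref{3asconvergence}) then give $\sigma_n^2\lesssim|\kappa_n|^{\alpha^*-2\delta}$, so the sub-Gaussian tail matches the mesh hypothesis (\ref{2asconvergence}) exactly.

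Your route through the Wiener--chaos decomposition of $S_n-\EE S_n$ cannot reproduce this exponent. Already the second-chaos component $V_{n,1}$ (the $k=1$ term) has, via hypercontractivity or Hanson--Wright, a tail of the form $\exp(-c\epsilon/\|V_{n,1}\|_2)$, i.e.\ sub-\emph{exponential} with rate $\|V_{n,1}\|_2\le\mathrm{Var}(S_n)^{1/2}\lesssim|\kappa_n|^{(\alpha^*-\eta)/2}$, not the sub-Gaussian rate $\exp(-c/|\kappa_n|^{\alpha^*-\eta})$ you assert. Summability of these probabilities would therefore require $|\kappa_n|^{\alpha^*/2}\log n\to 0$, which is strictly stronger than what (\ref{2asconvergence}) provides (take for instance $|\kappa_n|=(\log n)^{-(1+\epsilon)/\alpha^*}$ with small $\epsilon$: then $|\kappa_n|^{\alpha^*}\log n\to 0$ but $|\kappa_n|^{\alpha^*/2}\log n\to\infty$). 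Higher chaoses $k\ge 2$ contribute tails $\exp(-c(\epsilon/\|V_{n,k}\|_2)^{1/k})$ which are worse still, and summing over the infinite expansion of $|x|^r$ is an additional, unaddressed difficulty. The missing idea is precisely the passage to $Z_n=S_n^{1/r}$: by $\ell^r$--$\ell^{r'}$ duality this is a Lipschitz (indeed, a norm) functional of the Gaussian vector, and Gaussian isoperimetry yields a genuinely sub-Gaussian bound with the correct parameter $\sigma_n^2$ rather than $\sigma_n$.
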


\begin{rem}\label{mesh}
The right side of (\ref{2asconvergence}) is less than or equal to $1$.
Also it is positive for any $1<r<\infty$ if $\gamma\leq1/2$, and
for any $1<r<2/(2\gamma-1)$ if $\gamma>1/2$.
It follows from the proof of the theorem that
if the local variance $\rho(u)=u^{\gamma}$ then the hypothesis (\ref
{2asconvergence})
can be replaced by the following one
\[
\lim_{n\to\infty}|\kappa_n|^{(1\wedge\sfrac{2}{r})+(0\wedge
(1-2\gamma)) }\log n=0.
\]
It is known that this condition with $r=2$ is best possible
(\cite{V1974} and \cite
{MandR1992}, Theorem~2.6).
\end{rem}

\begin{pf*}{Proof of Theorem~\ref{asconvergence}}
The q.m. integral process $Y(t)=\mathrm{q.m.}\int_0^tf \,\mathrm{d}X$, $t\in[0,T]$, exists
due to reasons stated in the proof of Theorem~\ref{mean}.
For each $n\geq1$, let
\[
Z_n:= \Biggl(\sum_{i=1}^{m_n}c_{i,n}\bigl|
\Delta_i^nY\bigr|^r \Biggr)^{1/r},
\qquad \mbox{where } c_{i,n}:=\bigl[\rho\bigl(\Delta_i^n
\bigr)\bigr]^{-r}\Delta_i^n.
\]
Denoting the median of a real random variable $Z$ by $\med(Z)$,
by Lemma~2.2 of \cite{MandR1992}, for each
$\epsilon>0$
%
\begin{equation}
\label{1asconvergence} \Pr \bigl( \bigl\{\bigl|Z_n-\med(Z_n)\bigr|>\epsilon
\bigr\} \bigr)\leq2\exp \biggl\{ -\frac{\epsilon^2}{2\sigma_n^2} \biggr\},
\end{equation}
where
\[
\sigma_n^2:=\sup \Biggl\{E \Biggl(\sum
_{i=1}^{m_n}b_ic_{i,n}^{1/r}
\Delta _i^nY \Biggr)^2 \dvtx (b_i)_{i\in(m_n]}
\in\RR^{m_n}, \sum_{i=1}^{m_n}|b_i|^{r'}
\leq1 \Biggr\}
\]
and $1/r+1/r'=1$.
For each $n\geq1$ and $i\in(m_n]$, by (\ref{1Loeve1}) we have\vspace*{-1pt}
\[
M_{i,n}:=\sum_{j=1}^{m_n} \bigl|E
\bigl(\Delta_i^nY\Delta_j^nY
\bigr) \bigr| =\sum_{j=1}^{m_n} \biggl|\int
_{t_{i-1}^n}^{t_i^n}\int_{t_{j-1}^n}^{t_j^n}
f\otimes f\, \mathrm{d}^2\Gamma_X \biggr|.
\]
For each $n\geq1$ and
each vector $(b_i)\in\RR^{m_n}$, by Lemma~2.2 of  \cite{QMS1996}\vspace*{-1pt}
\begin{eqnarray*}
E \Biggl(\sum_{i=1}^{m_n}b_ic_{i,n}^{1/r}
\Delta_i^nY \Biggr)^2& =& \Biggl|\sum
_{i=1}^{m_n}\sum_{j=1}^{m_n}b_ib_j(c_{i,n}c_{j,n})^{\sfrac{1}{r}}
E \bigl(\Delta_i^nY \Delta_j^nY
\bigr) \Biggr|
\\
&\leq&\sum_{i=1}^{m_n}b_i^2c_{i,n}^{\sfrac{2}{r}}M_{i,n}
\\
&\leq&\lleft\{ %
\begin{array} {l@{\qquad}l} \displaystyle \max_{1\leq i\leq m_n}c_{i,n}^{\sfrac{2}{r}}M_{i,n}
\Biggl(\displaystyle \sum_{j=1}^{m_n} |b_j|^{r'}
\Biggr)^{\sfrac{2}{r'}},&\mbox{if $2\leq r<\infty$},
\\
\Biggl(\displaystyle \sum_{i=1}^{m_n}c_{i,n}^{\afrac{2}{2-r}}M_{i,n}^{\afrac{r}{2-r}}
\Biggr)^{\vfrac{2-r}{r}} \Biggl(\displaystyle \sum_{j=1}^{m_n}
|b_j|^{r'} \Biggr)^{\sfrac{2}{r'}}, &\mbox{if $1<r<2$}.
\end{array} %
\rright.
\end{eqnarray*}
It then follows that for each $n\geq1$\vspace*{-1pt}
\[
\sigma_n^2\leq\lleft\{ %
\begin{array} {l@{\qquad}l}
\displaystyle \max_{1\leq i\leq m_n} \biggl(\displaystyle \frac{\Delta_i^n}{[\rho(\Delta
_i^n)]^r} \biggr)^{\sfrac{2}{r}}
M_{i,n} ,&\mbox{if $2\leq r<\infty$},
\\
\Biggl(\displaystyle \sum_{i=1}^{m_n} \biggl(
\displaystyle \frac{\Delta_i^n}{[\rho(\Delta
_i^n)]^r} \biggr)^{\afrac{2}{2-r}} M_{i,n}^{\afrac{r}{2-r}}
\Biggr)^{\vfrac{2-r}{r}}, &\mbox{if $1<r<2$}. \end{array} %
\rright.
\]
By (\ref{2LL2}) if $p>1$ and by (\ref{2DRS}) if $p=1$, and then by
(\ref{3asconvergence}), for each $i$\vspace*{-1pt}
\[
M_{i,n}\leq K_{p,q}\|f\|_{[q]}^2\sum
_{j=1}^{m_n}V_p\bigl(\Gamma
_X;J_{i}^{\kappa_n}\times J_{j}^{\kappa_n}
\bigr) \leq C_2K_{p,q}\|f\|_{[q]}^2
\bigl[\Delta_i^n\bigr]^{\sfrac{1}{p}},
\]
where $K_{1,\infty}:=1$ and $\|f\|_{[\infty]}:=\|f\|_{\sup}$.
By the hypothesis on the local variance $\rho$, we have\vspace*{-1pt}
\[
\gamma_{\ast}(\rho)
=\inf \biggl\{\alpha>0\dvtx
\sup_{u> 0}\frac{u^{\alpha}}{\rho
(u)}<\infty \biggr\} =\gamma.
\]
In the case $p> 1$, we have $1-2\gamma> 0$, and
so for each $\delta\in(0,1/r)$,\vspace*{-1pt}
\begin{eqnarray*}
\sigma_n^2&\leq& 
C_2K_{p,q}\|f\|_{[q]}^2\lleft\{
\begin{array} {l@{\qquad}l} |\kappa_n|^{\sfrac{2}{r}-2\delta} \displaystyle \max
_{1\leq i\leq m_n} \biggl(\displaystyle \frac{(\Delta_i^n)^{\gamma+\delta
}}{\rho(\Delta_i^n)} \biggr)^{2} ,&
\mbox{if $2\leq r<\infty$},
\\
T^{\vfrac{2-r}{r}}|\kappa_n|^{1-2\delta} \displaystyle \max_{1\leq i\leq m_n}
\biggl(\displaystyle \frac{(\Delta_i^n)^{\gamma+\delta
}}{\rho(\Delta_i^n)} \biggr)^{2} , &\mbox{if $1<r<2$},
\end{array} %
\rright.
\\
&=&\mathrm{o}\bigl(1/(\log n)\bigr),
\end{eqnarray*}
as $n\to\infty$ by the hypothesis (\ref{2asconvergence}).
In the case $p= 1$, we have $1-2\gamma\leq0$, and so for each
$\delta>0$,
\begin{eqnarray*}
\sigma_n^2&\leq& C_2\|f\|_{\sup}^2
\lleft\{ %
\begin{array} {l@{\qquad}l} |\kappa_n|^{\sfrac{2}{r}+1-2\gamma-2\delta} \displaystyle \max
_{1\leq i\leq m_n} \biggl(\displaystyle \frac{(\Delta_i^n)^{\gamma+\delta
}}{\rho(\Delta_i^n)} \biggr)^{2} ,&
\mbox{if $2\leq r<\infty$},
\\
T^{\vfrac{2-r}{r}}|\kappa_n|^{2-2\gamma-2\delta}\displaystyle  \max_{1\leq i\leq m_n}
\biggl(\displaystyle \frac{(\Delta_i^n)^{\gamma+\delta
}}{\rho(\Delta_i^n)} \biggr)^{2} , &\mbox{if $1<r<2$},
\end{array} %
\rright.
\\
&=&\mathrm{o}\bigl(1/(\log n)\bigr),
\end{eqnarray*}
as $n\to\infty$ by the hypothesis (\ref{2asconvergence}).
By (\ref{1asconvergence}) and Borel--Cantelli lemma it then follows that
\[
\lim_{n\to\infty}\bigl|Z_n-\med(Z_n)\bigr|=0
\]
with probability one.
Using our Theorem~\ref{mean} and the argument of  \cite{MandR1992}, Theorem~2.3,
it follows that (\ref{4asconvergence}) holds with probability one.
\end{pf*}

In the case $f\equiv1$, we have the following conclusion.

%
\begin{cor}\label{asconvergenceX}
Let $T>0$, let $\rho\in R[0,T]$ be such that
$\gamma_{\ast}(\rho)=\gamma$ for some $\gamma\in(0,1)$,
and let $1<r<2/\max\{(2\gamma-1),0\}$.
Let $X=\{X(t)\dvtx t\in[0,T]\}$ be a mean zero Gaussian process from
the class
$\mathcal{LSI}(\rho(\cdot))$.
Suppose there is a constant $C_2$ such that the inequality
%
\begin{equation}
\label{1asconvergenceX} \sum_{j=1}^{m} \bigl|E\bigl[
\Delta_i^{\kappa}X\Delta_j^{\kappa}X\bigr] \bigr|
\leq C_2 \bigl(\Delta_i^{\kappa}
\bigr)^{1\wedge(2\gamma)}
\end{equation}
holds for each partition $\kappa=(t_j)_{j\in[m]}$ of $[0,T]$ and each
$i\in(m]$.
Let $(\kappa_n)$ be a sequence of partitions $\kappa_n=(t_i^n)_{i\in[m_n]}$
of $[0,T]$ such that \textup{(\ref{2asconvergence})} holds.
Then with probability one
%
\begin{equation}
\label{2asconvergenceX} \lim_{n\to\infty}\sum_{i=1}^{m_n}
\frac{|\Delta_i^nX|^r}{[\rho
(\Delta_i^n)]^{r}} \Delta_i^n=E|\eta|^rT,
\end{equation}
where $\eta$ is a standard normal random variable.
\end{cor}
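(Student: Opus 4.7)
The plan is to follow the proof of Theorem \ref{asconvergence} with the simplifications coming from $f\equiv 1$. In this case the q.m.\ integral is trivially $Y(t)=X(t)-X(0)$, so the ``integral process'' exists without any extra assumption on $\Gamma_X$ and $\Delta_i^n Y=\Delta_i^n X$. Hence the target (\ref{2asconvergenceX}) is equivalent to almost sure convergence of
\[
Z_n^r := \sum_{i=1}^{m_n}c_{i,n}|\Delta_i^n X|^r,\qquad c_{i,n}:=[\rho(\Delta_i^n)]^{-r}\Delta_i^n,
\]
to $E|\eta|^r T$.

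The first main step is the concentration inequality of Marcus and Rosen (Lemma 2.2 in \cite{MandR1992}): for each $\e>0$,
\[
\Pr\{|Z_n-\mathrm{med}(Z_n)|>\e\}\leq 2\exp\{-\e^2/(2\sigma_n^2)\},
\]
where $\sigma_n^2$ is the supremum of $E(\sum_i b_i c_{i,n}^{1/r}\Delta_i^n X)^2$ over vectors $(b_i)\in\RR^{m_n}$ with $\sum_i|b_i|^{r'}\le 1$ and $r'=r/(r-1)$. Applying Lemma \ref{qf} as in the proof of Theorem \ref{asconvergence} reduces the control of $\sigma_n^2$ to controlling $M_{i,n}:=\sum_{j=1}^{m_n}|E(\Delta_i^n X\Delta_j^n X)|$. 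This is the step where the present hypothesis short-circuits the argument: (\ref{1asconvergenceX}) directly gives $M_{i,n}\leq C_2(\Delta_i^n)^{1\wedge(2\gamma)}$, so there is no need to invoke $V_p(\Gamma_X;\cdot)$, the bound (\ref{1LL2}) or an analogue of (\ref{G7}). From here the estimation of $\sigma_n^2$ is identical to that in the proof of Theorem \ref{asconvergence} (split into the cases $1<r<2$ vs.\ $r\ge 2$ and $p=1$ vs.\ $p>1$) and yields $\sigma_n^2=o(1/\log n)$ under (\ref{2asconvergence}) together with the definition of $\gamma_{\ast}(\rho)$.

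Borel--Cantelli then gives $Z_n-\mathrm{med}(Z_n)\to 0$ almost surely. To identify the limit I would invoke Corollary \ref{meanX}, which precisely says $E Z_n^r\to E|\eta|^r T$, and then repeat the argument of Marcus and Rosen in \cite[Theorem 2.3]{MandR1992}: the concentration bound together with the Gaussian tails of the $\Delta_i^n X$ provide the uniform integrability of $Z_n^r$ needed to transfer convergence of means into convergence of medians, and combined with the almost sure control of $Z_n-\mathrm{med}(Z_n)$ this gives $Z_n^r\to E|\eta|^r T$ almost surely.

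The only delicate step I anticipate is this last passage from mean convergence (Corollary \ref{meanX}) plus concentration to the almost sure convergence of $Z_n^r$; everything else is a direct simplification of the proof of Theorem \ref{asconvergence}, since the assumption (\ref{1asconvergenceX}) is stated in exactly the form required to bound $M_{i,n}$ without any auxiliary $p$-variation bookkeeping on $\Gamma_X$.
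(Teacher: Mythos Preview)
Your proposal is correct and matches the paper's own proof: it runs the argument of Theorem \ref{asconvergence} with $f\equiv 1$, replacing the $p$-variation bound (\ref{3asconvergence}) by the direct hypothesis (\ref{1asconvergenceX}) on $M_{i,n}$ and replacing Theorem \ref{mean} by Corollary \ref{meanX}. The paper states the proof in exactly these terms, so there is nothing to add.
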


\begin{pf}
The proof is the same as of Theorem~\ref{asconvergence} except that now
Corollary~\ref{meanX} is used in place of Theorem~\ref{mean} and the bound
(\ref{1asconvergenceX}) is used in place of (\ref{3asconvergence}).
\end{pf}

\section*{Acknowledgements}
This research was funded by Grants (No. MIP-66/2010 and No.
MIP-053/2012) from the Research Council of Lithuania.



\printhistory

\end{document}